\theoremstyle{plain}
\newtheorem{theorem}{Theorem}[section]
\newtheorem{lemma}[theorem]{Lemma}
\newtheorem{prop}[theorem]{Proposition}
\newtheorem{cor}[theorem]{Corollary}
\theoremstyle{definition}
\newtheorem{definition}[theorem]{Definition}
\newtheorem{remark}[theorem]{Remark}
\newtheorem{example}[theorem]{Example}
\theoremstyle{remark}
\numberwithin{equation}{section}
\begin{document}
	\title[Smoothness of bounded linear operators on  semi-Hilbert space]{Smoothness in the space of bounded linear operators on  semi-Hilbert space}
		\author[Barik,  Ghosh, Paul and Sain]{Somdatta Barik, Souvik Ghosh, Kallol Paul and Debmalya Sain}
	
     \address[Barik]{Department of Mathematics\\ Jadavpur University\\ Kolkata 700032\\ West Bengal\\ INDIA}
     \email{bariksomdatta97@gmail.com}
     
     \address[Ghosh]{Department of Mathematics\\  Jadavpur University\\ Kolkata 700032\\ West Bengal\\ INDIA}	
     \email{sghosh0019@gmail.com}
     
     \address[Paul]{Vice-Chancellor, Kalyani University \& Professor of Mathematics\\ Jadavpur University (on lien) \\ Kolkata \\ West Bengal\\ INDIA}
     \email{kalloldada@gmail.com}
     
     	\address[Sain]{Department of Mathematics, Indian Institute of Information Technology, Raichur, Karnataka, INDIA}
     		\email{saindebmalya@gmail.com}

	\begin{abstract}
		Given a nonzero positive operator $A$ on a Hilbert space $\mathbb{H}$, a semi-inner product is naturally induced on $\mathbb{H}$. In this work, we introduce  the notion of \emph{$A$-smoothness} for bounded linear operators on the resulting semi-Hilbert space and investigate its various properties. We provide a comprehensive characterization of the $A$-smoothness for $A$-bounded operators and further analyze the $A$-smoothness of $A$-compact operators in terms of their $A$-norm attainment sets. Utilizing these characterizations, we establish that G\^{a}teaux differentiability of the semi-norm $\|\cdot\|_A$ at an $A$-bounded operator is equivalent to its $A$-smoothness. Furthermore, we characterize the $A$-smoothness of $2\times 2$ block diagonal matrices.

	\end{abstract}
	
	\keywords{G\^ ateaux derivative; positive operators;  semi-Hilbert space; norm attainment set}
	\subjclass[2020]{46B20, 46C50, 47L05.}
	\maketitle
	
	\section{introduction}
	
	
	The concept of \emph{G\^{a}teaux differentiability} of the norm function plays a central role in understanding the geometry of Banach spaces and the space of bounded linear operators. It corresponds to the notion of smoothness of a point, meaning that the point admits a unique supporting functional. Several works have contributed to this area of study (see \cite{CGPS-AFA,MPTS,PSG,Rao_LAA_17, Roy}). In this note, we aim to explore the G\^{a}teaux differentiability of a semi-norm within the setting of semi-Hilbertian operators. This investigation is carried out through a newly introduced concept known as \emph{A-smoothness}. Before presenting the main results, we begin by establishing some notations and terminology that will be used throughout this work.\\

	Let $(\mathbb{H}, \langle \cdot, \cdot \rangle)$ be a Hilbert space over the scalar field $\mathbb{R}$ or $\mathbb{C}.$  We denote $\Re(z)$ as the real part of $z \in \mathbb{C}.$ Let $\mathbb{B}(\mathbb H)$ be the collection of all bounded linear operators on $\mathbb{H}.$ An operator $A\in \mathbb{B}(\mathbb H)$ is said to be positive if $\langle Ax, x\rangle\geq0$ for all $x\in \mathbb H$. For any $T\in\mathbb{B}(\mathbb H),$  the range and the null space of $T$ are denoted by $R(T)$ and $N(T)$, respectively. Suppose that $A$ is a positive nonzero operator which generates a semi-inner product $\langle \cdot, \cdot\rangle_A$ on $\mathbb{H}$ , defined as $\langle x, y\rangle_A=\langle Ax, y\rangle.$ The semi-inner product induces a semi-norm $\|\cdot\|_A$, defined as $\|x\|_A=\sqrt{\langle x, x\rangle_A}=\|A^{1/2}x\|$ for all $x\in\mathbb H.$ Note that, $\|x\|_A=0$ if and only if $x\in N(A)$. The vector space $\mathbb H$, equipped with the semi-inner product $\langle \cdot, \cdot\rangle_A$, is referred to as a semi-Hilbert space. This study was initiated by Krein in \cite{Krein}. The semi-Hilbert space is complete if and only if $R(A)$ is closed in $\mathbb H$. The semi-inner product $\langle \cdot, \cdot\rangle_A$ induces an inner product
	on the quotient space $\mathbb H/N(A)$ defined as $[\bar x,\bar y ]=\langle x, y\rangle_A$ for all $\bar x,\bar y\in\mathbb H/N(A)$.
	The completion of $(\mathbb H/N(A), [\cdot, \cdot])$ is isometrically isomorphic to the Hilbert space $\left(R(A^{1/2}), \langle\cdot, \cdot\rangle_{\textbf{R}(A^{1/2})}\right)$, where the inner product is defined as $$\langle A^{1/2}x,A^{1/2}y\rangle_{\textbf{R}(A^{1/2})}=\langle P_{\overline{R(A)}}x, P_{\overline{R(A)}}y\rangle$$ for all $x, y\in\mathbb H.$ The Hilbert space $\left(R(A^{1/2}), \langle\cdot, \cdot\rangle_{\textbf{R}(A^{1/2})}\right)$ is denoted by $\textbf{R}(A^{1/2})$ and the norm associated with the inner product $\langle\cdot, \cdot\rangle_{\textbf{R}(A^{1/2})}$ is written as $\|\cdot\|_{\textbf{R}(A^{1/2})}$. 
	Note that, $R(A)$ is dense in $\textbf{R}(A^{1/2})$ (see \cite{ACG09}). We direct the reader to \cite{ACG09, ACG08, ACG_IEOT_08} for more insights into the space $\textbf{R}(A^{1/2})$. Let $	B_{\mathbb{H}(A)} = \{x \in \mathbb{H} : \|x\|_A \leq 1\}$ and $S_{\mathbb{H}(A)} = \{x \in \mathbb{H} : \|x\|_A = 1\}$ be the $A$-unit ball and the $A$-unit sphere of the semi-Hilbert space $\left(\mathbb{H}, \langle \cdot, \cdot\rangle_A\right),$ respectively. An operator $T \in \mathbb{B}(\mathbb{H})$ is said to be $A$-bounded if there exists a positive constant $c$ such that $ \|Tx\|_A \leq c \|x\|_A$  for all $x \in \mathbb{H}.$ The set of all such operators is denoted by $\mathbb B_{A^{1/2}}(\mathbb{H}).$	For any $T \in \mathbb B_{A^{1/2}}(\mathbb{H})$, the $A$-norm is given by
	\[
	\|T\|_A = \sup_{\|x\|_A = 1} \|Tx\|_A = \sup \{ |\langle Tx, y \rangle_A| : x, y \in \mathbb{H}, \ \|x\|_A = \|y\|_A = 1 \}.
	\]
	Let $T \in \mathbb B_{A^{1/2}}(\mathbb{H}).$ The $A$-norm attainment set of $T$ is defined as $
	M_A(T) = \{ x \in \mathbb{H} : \|x\|_A = 1,\ \|Tx\|_A = \|T\|_A \}.$ Whenever $A=I,$ we denote $M(T)$ as the usual norm attainment set of $T.$ For $T \in \mathbb{B}(\mathbb{H}),$ an operator $W \in \mathbb{B}(\mathbb{H})$ is called an $A$-adjoint of $T$ if $\langle Tx,y\rangle_A=\langle x,Wy\rangle_A$ for all $x, y\in \mathbb{H},$ equivalently, the equation $AX=T^*A$ has a solution. Not every $T\in\mathbb{B}(\mathbb{H})$  admits an $A$-adjoint. By Douglas theorem \cite{D66}, $T$ admits a unique $A$-adjoint if and only if $R(T^*A)\subset R(A).$ From now on, we denote by $\mathbb B_A(\mathbb{H})$ the set of all $T\in\mathbb{B}(\mathbb{H})$ admitting a unique $A$-adjoint: 
	\[\mathbb B_A(\mathbb{H})=\{T \in \mathbb{B}(\mathbb{H}): R(T^*A)\subset R(A)\}.\]
	If $T\in \mathbb B_A(\mathbb{H}),$ its $A$-adjoint is denoted by $T^{\sharp}$ and satisfies $R(T^\sharp)\subset\overline{R(A)}$. Note that $T^\sharp=A^\dagger T^*A$, where $A^\dagger$ is the Moore-Penrose inverse of $A,$ see \cite{P}. Moreover, $\mathbb B_A(\mathbb{H})\subset \mathbb B_{A^{1/2}}(\mathbb{H})\subset\mathbb{B}(\mathbb{H})$. We direct readers to \cite{ACG09, ACG08, ACG_IEOT_08,SSP_AFA_21,Z19} for further insights in this direction. $T \in \mathbb{B}(\mathbb{H})$ is said to be $A$-compact \cite{BKPS_23} if every bounded sequence  in $(\mathbb H, \|\cdot\|_A)$ has a convergent subsequence in $(\mathbb H, \|\cdot\|_A)$. 
	
	The notion of Birkhoff–James orthogonality has been explored in recent years for the study of geometric and analytic properties in the space of bounded linear operators (see \cite{MPS}). An operator $T\in\mathbb B(\mathbb H)$ is said to be Birkhoff-James orthogonal to $S\in\mathbb B(\mathbb H)$ if $\|T+\lambda S\|\geq \|T\|$ for all scalars $\lambda.$ Analogously, in the semi-Hilbertian setting, $T \in \mathbb{B}_{A^{1/2}}(\mathbb{H})$ is $A$-Birkhoff–James orthogonal to $S \in \mathbb{B}_{A^{1/2}}(\mathbb{H}),$ denoted by $T\perp_A^B S,$ (see \cite{Roy, SSP_AFA_21}) if $\|T+\lambda S\|_A\geq \|T\|_A$ for all scalars $\lambda.$ It is well known that the right additivity of Birkhoff-James orthogonality at a point of a normed linear space characterizes smoothness of the norm at that point. Motivated by this we introduce the notion of smoothness of operators in $\mathbb B_{A^{1/2}}(\mathbb{H})$ as follows:
	
	\begin{definition}
		Let $T \in \mathbb B_{A^{1/2}}(\mathbb{H})$ be such that $\|T\|_A\neq0.$ Then $T$ is said to be $A$-smooth if for any $S_1, S_2 \in B_{A^{1/2}}(\mathbb H)$ such that $T \perp_A^B S_1$ and $T \perp_A^B S_2$ imply $T \perp_A^B S_1+S_2.$
	\end{definition}
	
	In this work, we begin by characterizing the notion of $A$-smoothness for $A$-bounded operators in a semi-Hilbert space. We then focus on the case where the set $M_A(T)$ is non-empty, providing a refined characterization of $A$-smoothness under this condition. We establish that an $A$-compact operator is $A$-smooth if and only if the intersection $M_A(T) \cap \overline{R(A)}$ is a singleton up to scalar multiples. Furthermore, we demonstrate the equivalence between $A$-smoothness and the existence of the G\^{a}teaux derivative of the associated semi-norm function in semi-Hilbertian operator spaces. We also explore the relation between the $A$-smoothness of operators in $\mathbb{B}_{A^{1/2}}(\mathbb{H})$ and the smoothness of operators in $\mathbb{B}(\textbf{R}(A^{1/2}))$. Finally, we examine $A$-smoothness in the context of block diagonal matrices.

	\section{Main Results.}
	We first aim to characterize the newly introduced notion of $A$-smoothness. Before that we  note the following known results, which will be useful throughout this article.
	
	\begin{lemma}\cite{ACG09}\label{lemma1}
		For any $T\in \mathbb B_{A^{1/2}}(\mathbb{H}),$ there exists a unique $\widetilde{T} \in \mathbb B(\textbf{R}(A^{1/2}))$ such that $\widetilde{T}W_A=W_AT$, where $W_A: \mathbb{H}\to \textbf{R}(A^{1/2})$ satisfying $W_Ax=Ax$ for all $x \in \mathbb{H}.$
	\end{lemma}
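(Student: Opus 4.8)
The plan is to define $\widetilde{T}$ first on the range of $W_A$, where its action is forced by the intertwining relation, and then to extend it to all of $\textbf{R}(A^{1/2})$ by continuity, using the fact (quoted from \cite{ACG09}) that $R(A)$ is dense in $\textbf{R}(A^{1/2})$. Since $W_A x = Ax$, the requirement $\widetilde{T}W_A = W_A T$ dictates $\widetilde{T}(Ax) = ATx$ for every $x \in \mathbb{H}$, so I would simply take this as the definition of $\widetilde{T}$ on $R(A) = R(W_A)$, and then check that it is a well-defined bounded linear map there.

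The first thing to verify is well-definedness. If $Ax_1 = Ax_2$, then $x_1 - x_2 \in N(A)$, so $\|x_1 - x_2\|_A = 0$; since $T$ is $A$-bounded, $\|T(x_1 - x_2)\|_A \leq \|T\|_A \|x_1 - x_2\|_A = 0$, whence $T(x_1 - x_2) \in N(A)$ and therefore $ATx_1 = ATx_2$. This is precisely the point where $A$-boundedness of $T$ is indispensable, and I expect it to be the only genuinely delicate step. Linearity of $\widetilde{T}$ on $R(A)$ is then immediate from linearity of $A$ and of $T$.

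The crucial computation is the isometric identity $\|W_A x\|_{\textbf{R}(A^{1/2})} = \|x\|_A$. Writing $Ax = A^{1/2}(A^{1/2}x)$ and applying the definition of the inner product on $\textbf{R}(A^{1/2})$, together with the fact that $A^{1/2}x \in R(A^{1/2}) \subseteq \overline{R(A^{1/2})} = \overline{R(A)}$ (so that $P_{\overline{R(A)}}$ fixes it), I obtain $\|Ax\|_{\textbf{R}(A^{1/2})}^2 = \|P_{\overline{R(A)}}A^{1/2}x\|^2 = \|A^{1/2}x\|^2 = \langle Ax, x\rangle = \|x\|_A^2$. Applying this identity to both $Ax$ and $ATx$ gives $\|\widetilde{T}(Ax)\|_{\textbf{R}(A^{1/2})} = \|ATx\|_{\textbf{R}(A^{1/2})} = \|Tx\|_A \leq \|T\|_A \|x\|_A = \|T\|_A \|Ax\|_{\textbf{R}(A^{1/2})}$, so $\widetilde{T}$ is bounded on $R(A)$ with operator norm at most $\|T\|_A$.

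Finally, since $R(A)$ is dense in $\textbf{R}(A^{1/2})$ and $\widetilde{T}$ is bounded and linear on this dense subspace, the bounded-linear-extension theorem furnishes a unique continuous extension $\widetilde{T} \in \mathbb{B}(\textbf{R}(A^{1/2}))$, which still satisfies $\widetilde{T}W_A = W_A T$ by continuity. Uniqueness in $\mathbb{B}(\textbf{R}(A^{1/2}))$ is then automatic: any $S \in \mathbb{B}(\textbf{R}(A^{1/2}))$ with $S W_A = W_A T$ agrees with $\widetilde{T}$ on $R(W_A) = R(A)$, hence on all of $\textbf{R}(A^{1/2})$ by density and continuity. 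Apart from the well-definedness check, every step is a routine density-and-extension argument.
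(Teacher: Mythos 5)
Your proof is correct, and there is in fact nothing in the paper to compare it against: the lemma is quoted from \cite{ACG09} without proof. Your argument --- define $\widetilde{T}(Ax)=ATx$ on $R(A)=R(W_A)$, use $A$-boundedness of $T$ to get well-definedness (the inclusion $N(A)$ into $N(A)$ under $T$ is indeed the only delicate point), establish the isometry $\|Ax\|_{\textbf{R}(A^{1/2})}=\|x\|_A$ via $\overline{R(A^{1/2})}=\overline{R(A)}$ (the same computation the paper itself carries out inside the proof of Proposition \ref{norm attainment}), and then extend by continuity from the dense subspace $R(A)$ --- is the standard proof and is essentially the argument of the cited reference. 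As a small bonus, your boundedness estimate is actually an equality, $\sup\{\|ATx\|_{\textbf{R}(A^{1/2})}/\|Ax\|_{\textbf{R}(A^{1/2})}: \|x\|_A\neq 0\}=\|T\|_A$, so the same argument also delivers Lemma \ref{lemma2}(i), namely $\|\widetilde{T}\|_{\mathbb B(\textbf{R}(A^{1/2}))}=\|T\|_A$, with no extra work.
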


	\begin{lemma}\label{lemma2}
		\begin{itemize}
			\item [(i)]  \cite{F_AFA_20} Let $T\in \mathbb B_{A^{1/2}}(\mathbb{H}).$ Then $\|\widetilde{T}\|_{\mathbb B(\textbf{R}(A^{1/2}))} = \|T\|_A$.
			\item [(ii)] \cite{MSS_LAMA_13} Let $T\in \mathbb B_A(\mathbb{H}).$ Then $\widetilde{T^{\sharp}}=(\widetilde{T})^*$ .
			\item [(iii)] \cite{BKPS_23} Let $T\in \mathbb B_{A^{1/2}}(\mathbb{H})$  and $R(A)$ be closed. Then $T$ is $A$-compact if and only if $\widetilde{T}$ is compact.
		\end{itemize}
		
	\end{lemma}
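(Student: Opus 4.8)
The plan is to deduce all three parts from a single structural observation about the map $W_A$: it is seminorm-preserving from $(\mathbb{H},\|\cdot\|_A)$ into $\textbf{R}(A^{1/2})$ with dense range. First I would record that for every $x \in \mathbb{H}$,
\[
\|W_A x\|_{\textbf{R}(A^{1/2})}^2 = \langle A^{1/2}(A^{1/2}x), A^{1/2}(A^{1/2}x)\rangle_{\textbf{R}(A^{1/2})} = \|P_{\overline{R(A)}} A^{1/2}x\|^2 = \|A^{1/2}x\|^2 = \|x\|_A^2,
\]
where the second equality is the definition of the inner product on $\textbf{R}(A^{1/2})$, and the third uses $A^{1/2}x \in R(A^{1/2}) \subseteq \overline{R(A)}$, so that $P_{\overline{R(A)}}$ fixes it. Hence $\|W_A x\|_{\textbf{R}(A^{1/2})} = \|x\|_A$, and since $R(W_A) = R(A)$ is dense in $\textbf{R}(A^{1/2})$, the map $W_A$ acts like an isometry modulo $N(A)$. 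Combined with the intertwining identity $\widetilde{T} W_A = W_A T$ from Lemma \ref{lemma1}, this is the engine for everything that follows.

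For part (i), I would use $\|\widetilde{T}(W_A x)\|_{\textbf{R}(A^{1/2})} = \|W_A(Tx)\|_{\textbf{R}(A^{1/2})} = \|Tx\|_A$ together with $\|W_A x\|_{\textbf{R}(A^{1/2})} = \|x\|_A$ to obtain
\[
\frac{\|\widetilde{T}(W_A x)\|_{\textbf{R}(A^{1/2})}}{\|W_A x\|_{\textbf{R}(A^{1/2})}} = \frac{\|Tx\|_A}{\|x\|_A} \quad \text{whenever } \|x\|_A \neq 0;
\]
taking the supremum and using that the operator norm of the bounded operator $\widetilde{T}$ already equals its supremum over the dense subspace $R(A)$ yields $\|\widetilde{T}\|_{\mathbb B(\textbf{R}(A^{1/2}))} = \|T\|_A$. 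For part (ii), the same polarized computation shows $\langle \widetilde{T} W_A x, W_A y\rangle_{\textbf{R}(A^{1/2})} = \langle Tx, y\rangle_A$ and $\langle W_A x, \widetilde{T^{\sharp}} W_A y\rangle_{\textbf{R}(A^{1/2})} = \langle x, T^{\sharp} y\rangle_A$; the defining relation $\langle Tx, y\rangle_A = \langle x, T^{\sharp} y\rangle_A$ of the $A$-adjoint then forces $\langle \widetilde{T} u, v\rangle_{\textbf{R}(A^{1/2})} = \langle u, \widetilde{T^{\sharp}} v\rangle_{\textbf{R}(A^{1/2})}$ for $u, v$ in the dense set $R(A)$, and continuity extends this to all of $\textbf{R}(A^{1/2})$, giving $\widetilde{T^{\sharp}} = (\widetilde{T})^*$.

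For part (iii) I would transport bounded sequences across $W_A$. Assuming $R(A)$ is closed, so that $(\mathbb{H},\|\cdot\|_A)$ is complete and $R(A) = \textbf{R}(A^{1/2})$ as sets, suppose $\widetilde{T}$ is compact and $(x_n)$ is $\|\cdot\|_A$-bounded. Then $(W_A x_n)$ is bounded in $\textbf{R}(A^{1/2})$, so a subsequence $(\widetilde{T} W_A x_{n_k}) = (W_A T x_{n_k})$ converges; the seminorm-preserving property turns this into a $\|\cdot\|_A$-Cauchy, hence $\|\cdot\|_A$-convergent, subsequence of $(Tx_n)$, so $T$ is $A$-compact. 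The converse runs the same argument in reverse, writing an arbitrary bounded sequence in $\textbf{R}(A^{1/2}) = R(A)$ as $(W_A x_n)$ with $(x_n)$ being $\|\cdot\|_A$-bounded and invoking $A$-compactness of $T$. The main obstacle I anticipate is the bookkeeping forced by working with a seminorm rather than a norm: one must invoke the closedness of $R(A)$ in exactly the right places, both to guarantee completeness (so that $\|\cdot\|_A$-Cauchy sequences converge) and to secure the identification $R(A) = \textbf{R}(A^{1/2})$ (so that every element of the Hilbert space $\textbf{R}(A^{1/2})$ genuinely has the form $W_A x$), while remembering that $W_A$ collapses $N(A)$ when translating convergence back and forth.
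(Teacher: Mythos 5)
Your proof is correct, but there is no internal proof to compare it with: the paper treats Lemma \ref{lemma2} as a collection of known results and simply cites \cite{F_AFA_20}, \cite{MSS_LAMA_13} and \cite{BKPS_23} for parts (i), (ii) and (iii), respectively. What you give is a unified, self-contained derivation from the single fact that $W_A$ is seminorm-preserving with dense range --- an observation the paper itself makes and uses (the computation $\|W_Ax\|_{\textbf{R}(A^{1/2})}=\|x\|_A$ is carried out essentially verbatim inside the proof of Proposition \ref{norm attainment}), so your route is consistent with the paper's toolkit while replacing three external references by a page of argument. Parts (i) and (ii) are airtight: density of $R(A)$ in $\textbf{R}(A^{1/2})$ lets you compute $\|\widetilde{T}\|_{\mathbb B(\textbf{R}(A^{1/2}))}$ on $R(A)$, and your polarized identity plus continuity transports $\langle Tx,y\rangle_A=\langle x,T^{\sharp}y\rangle_A$ into $\langle \widetilde{T}u,v\rangle_{\textbf{R}(A^{1/2})}=\langle u,\widetilde{T^{\sharp}}v\rangle_{\textbf{R}(A^{1/2})}$ on the dense set $R(A)\times R(A)$ and hence everywhere; you should only add the remark that $T^{\sharp}\in\mathbb B_{A^{1/2}}(\mathbb{H})$, so that $\widetilde{T^{\sharp}}$ exists via Lemma \ref{lemma1} (this is standard for $T\in\mathbb B_A(\mathbb{H})$ and is implicitly presupposed by the statement). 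In part (iii), the only step you assert rather than prove is the set identification $\textbf{R}(A^{1/2})=R(A^{1/2})=R(A)$ under closedness of $R(A)$; it is genuinely needed (otherwise not every bounded sequence in $\textbf{R}(A^{1/2})$ lifts through $W_A$) and deserves its one-line justification: if $R(A)$ is closed, then $A$ maps $N(A)^{\perp}$ bijectively onto $R(A)$ and is therefore bounded below there, which forces $\sigma(A)\subseteq\{0\}\cup[\varepsilon,\infty)$ for some $\varepsilon>0$; hence $A^{1/2}$ is also bounded below on $N(A^{1/2})^{\perp}=N(A)^{\perp}$, so $R(A^{1/2})$ is closed, and then $R(A)=\overline{R(A)}=N(A)^{\perp}=\overline{R(A^{1/2})}=R(A^{1/2})$. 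With that line added, together with the completeness of $(\mathbb{H},\|\cdot\|_A)$ (which the paper records as equivalent to closedness of $R(A)$), your Cauchy-sequence transport in both directions of (iii) goes through exactly as written.
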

	
	In this connection we observe a relation between the $A$-norm attainment set of $T \in \mathbb B_{A^{1/2}}(\mathbb{H})$ and the norm attainment set of $\widetilde{T }\in  \mathbb B(\textbf{R}(A^{1/2})).$
	
	\begin{prop}\label{norm attainment}
		Let $T \in \mathbb B_{A^{1/2}}(\mathbb H)$ and $\widetilde{T} \in  \mathbb B(\textbf{R}(A^{1/2}))$ be as given in Lemma \ref{lemma1}. Then $x \in M_A(T)\cap \overline{R(A)}$ if and only if $Ax \in M(\widetilde{T}).$
	\end{prop}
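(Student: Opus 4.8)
The plan is to transport the entire problem through the map $W_A \colon \mathbb{H} \to \textbf{R}(A^{1/2})$ of Lemma \ref{lemma1} and the identity $\|\widetilde{T}\|_{\mathbb B(\textbf{R}(A^{1/2}))} = \|T\|_A$ of Lemma \ref{lemma2}(i), so that both defining conditions of the $A$-norm attainment set become the corresponding conditions for $\widetilde{T}$. The first and central step is to verify that $W_A$ is norm preserving in the sense that $\|W_A x\|_{\textbf{R}(A^{1/2})} = \|x\|_A$ for every $x \in \mathbb{H}$. To see this I would write $W_A x = Ax = A^{1/2}(A^{1/2}x)$ and apply the defining formula for the inner product on $\textbf{R}(A^{1/2})$; since $A^{1/2}x \in R(A^{1/2}) \subseteq \overline{R(A)}$, the projection $P_{\overline{R(A)}}$ fixes it, so that $\|Ax\|_{\textbf{R}(A^{1/2})}^2 = \|P_{\overline{R(A)}}A^{1/2}x\|^2 = \|A^{1/2}x\|^2 = \|x\|_A^2$.

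Next I would transport the operator itself. The intertwining relation $\widetilde{T}W_A = W_A T$ gives $\widetilde{T}(Ax) = A(Tx)$ for all $x \in \mathbb{H}$, and hence, by the norm identity just established, $\|\widetilde{T}(Ax)\|_{\textbf{R}(A^{1/2})} = \|Tx\|_A$. Combined with $\|\widetilde{T}\| = \|T\|_A$, these two equalities set up a dictionary: the pair $\|x\|_A = 1$ and $\|Tx\|_A = \|T\|_A$ (membership of $x$ in $M_A(T)$) corresponds exactly to the pair $\|Ax\|_{\textbf{R}(A^{1/2})} = 1$ and $\|\widetilde{T}(Ax)\|_{\textbf{R}(A^{1/2})} = \|\widetilde{T}\|$ (membership of $Ax$ in $M(\widetilde{T})$).

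With this dictionary the forward implication is immediate and in fact does not even use the restriction $x \in \overline{R(A)}$: if $x \in M_A(T)\cap\overline{R(A)}$ then $\|x\|_A = 1$ forces $\|Ax\|_{\textbf{R}(A^{1/2})} = 1$, and $\|Tx\|_A = \|T\|_A$ forces $\|\widetilde{T}(Ax)\|_{\textbf{R}(A^{1/2})} = \|\widetilde{T}\|$, so $Ax \in M(\widetilde{T})$. For the converse, reading the same identities backwards from $Ax \in M(\widetilde{T})$ yields $\|x\|_A = 1$ and $\|Tx\|_A = \|T\|_A$, that is $x \in M_A(T)$.

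The step I expect to require the most care is recovering the condition $x \in \overline{R(A)}$ in the converse, because both the $A$-norms and the vector $Ax$ are blind to the component of $x$ lying in $N(A)$. To handle this I would invoke the orthogonal decomposition $\mathbb{H} = \overline{R(A)} \oplus N(A)$, valid since $A$ is positive so that $N(A) = R(A)^{\perp}$, together with the fact that any $A$-bounded $T$ satisfies $T(N(A)) \subseteq N(A)$ (indeed $\|x\|_A = 0$ gives $\|Tx\|_A \le c\|x\|_A = 0$). Writing $x = P_{\overline{R(A)}}x + (I - P_{\overline{R(A)}})x$, one checks that $Ax = A\,P_{\overline{R(A)}}x$ and, applying $A^{1/2}$, that $\|x\|_A = \|P_{\overline{R(A)}}x\|_A$ and $\|Tx\|_A = \|T\,P_{\overline{R(A)}}x\|_A$. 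Thus the essential representative sits in $\overline{R(A)}$, which both pins down the $\overline{R(A)}$-membership asserted in the statement and exhibits $x \mapsto Ax$ as the correspondence between $M_A(T)\cap\overline{R(A)}$ and its image in $M(\widetilde{T})$.
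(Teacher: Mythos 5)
Your proposal is correct and, in its core, follows exactly the paper's route: the identity $\|Ax\|_{\textbf{R}(A^{1/2})}=\|x\|_A$ (obtained because $P_{\overline{R(A)}}$ fixes $A^{1/2}x$), the intertwining relation $\widetilde{T}Ax=ATx$, and Lemma \ref{lemma2}(i) translate the two defining conditions of $M_A(T)$ into the two defining conditions of $M(\widetilde{T})$; the paper's proof is precisely this computation, with the converse dismissed as ``similar argument.'' Where you go beyond the paper is your final paragraph, and your instinct there is sound: from $Ax\in M(\widetilde{T})$ alone one can never conclude $x\in\overline{R(A)}$, since adding a nonzero vector of $N(A)$ to $x$ changes neither $Ax$ nor the conditions $\|x\|_A=1$ and $\|Tx\|_A=\|T\|_A$, so the biconditional read literally over all $x\in\mathbb{H}$ fails for such perturbed $x$ --- a point the paper's one-line converse silently skips. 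Your repair via $\mathbb{H}=\overline{R(A)}\oplus N(A)$ and $T(N(A))\subseteq N(A)$ is the right one, but note that it does not (and cannot) ``pin down the $\overline{R(A)}$-membership'' of $x$ itself: what it actually shows is that $P_{\overline{R(A)}}x\in M_A(T)\cap\overline{R(A)}$ with $AP_{\overline{R(A)}}x=Ax$. In other words, the converse must be read with $x\in\overline{R(A)}$ as a standing hypothesis, equivalently as the assertion that $x\mapsto Ax$ maps $M_A(T)\cap\overline{R(A)}$ bijectively onto $M(\widetilde{T})\cap R(A)$; with that reading --- which is how the paper later uses the proposition, e.g.\ in Theorem \ref{cor; compact} --- your argument is complete and in fact more careful than the published one.
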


	\begin{proof}
		Suppose that $x \in M_A(T)\cap \overline{R(A)}.$ This implies $\|Tx\|_A=\|T\|_A.$ By using Lemma \ref{lemma1} and Lemma \ref{lemma2} (i), we have
		\[\|\widetilde{T}\|_{\mathbb B(\textbf{R}(A^{1/2}))} = \|T\|_A= \|Tx\|_A = \|ATx\|_{\textbf{R}(A^{1/2})}= \|\widetilde{T}Ax\|_{\textbf{R}(A^{1/2})}.\]
		Also, note that 
		\begin{eqnarray*}
			\|Ax\|_{\textbf{R}(A^{1/2})}^2&=& \langle A^{1/2}(A^{1/2}x), A^{1/2}(A^{1/2}x)\rangle_{\textbf{R}(A^{1/2})}\\ &= &\langle P_{\overline{R(A)}}(A^{1/2}x), P_{\overline{R(A)}}(A^{1/2}x)\rangle \\ &=& \|A^{1/2}x\|^2=\|x\|^2_A =1.
		\end{eqnarray*}
		Therefore, $Ax\in M(\widetilde{T}).$ Using similar argument we can obtain the converse part. 
	\end{proof}
	
	Next, we mention the characterization of $A$-Birkhoff-James orthogonality, which plays a significant role in our whole scheme of things.
	\begin{theorem}\cite[Th. 2.2]{Z19}\label{zamani ch}
		Let $T, S \in \mathbb B_{A^{1/2}}(\mathbb{H}).$ Then the following conditions are equivalent:
		\begin{itemize}
			\item[(i)] $T\perp_A^B S$.
			\item[(ii)] There exists a sequence $\{x_n\}\in B_{\mathbb{H}(A)}$ such that $\lim\limits_{n\to \infty}\|Tx_n\|_A = \|T\|_A$ and $\lim\limits_{n\to\infty}\langle Tx_n, Sx_n\rangle=0. $
		\end{itemize}
	\end{theorem}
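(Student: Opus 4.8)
The plan is to transport the whole question to the genuine Hilbert space $\textbf{R}(A^{1/2})$ through the isometric correspondence $T\mapsto\widetilde{T}$ of Lemma \ref{lemma1}, where the classical operator Birkhoff--James theory is available, and then to pull the resulting sequence back to $\mathbb H$ using the density of $R(A)$ in $\textbf{R}(A^{1/2})$. First I would record that $T\mapsto\widetilde{T}$ is linear: since $\widetilde{T}$ is the unique element of $\mathbb B(\textbf{R}(A^{1/2}))$ with $\widetilde{T}W_A=W_AT$, the uniqueness in Lemma \ref{lemma1} forces $\widetilde{T+\lambda S}=\widetilde{T}+\lambda\widetilde{S}$ for all scalars $\lambda$ and all $T,S\in\mathbb B_{A^{1/2}}(\mathbb H)$. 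Combined with the isometry $\|\widetilde{R}\|_{\mathbb B(\textbf{R}(A^{1/2}))}=\|R\|_A$ of Lemma \ref{lemma2}(i), this gives $\|T+\lambda S\|_A=\|\widetilde{T}+\lambda\widetilde{S}\|$ for every $\lambda$, so that $T\perp_A^B S$ holds in $\mathbb B_{A^{1/2}}(\mathbb H)$ if and only if $\widetilde{T}\perp_B\widetilde{S}$ holds in $\mathbb B(\textbf{R}(A^{1/2}))$.

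The bridge between the two pictures is supplied by the identities, valid for every $x\in\mathbb H$,
\[
\|\widetilde{T}(Ax)\|_{\textbf{R}(A^{1/2})}=\|ATx\|_{\textbf{R}(A^{1/2})}=\|Tx\|_A,\qquad \langle\widetilde{T}(Ax),\widetilde{S}(Ax)\rangle_{\textbf{R}(A^{1/2})}=\langle Tx,Sx\rangle_A,
\]
which follow from $\widetilde{T}(Ax)=\widetilde{T}W_Ax=W_ATx=ATx$ together with the defining formula for $\langle\cdot,\cdot\rangle_{\textbf{R}(A^{1/2})}$ and the fact that $A^{1/2}Tx,A^{1/2}Sx\in\overline{R(A)}$, on which $P_{\overline{R(A)}}$ acts as the identity. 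With these in hand the implication (ii) $\Rightarrow$ (i) is immediate: given a sequence $\{x_n\}$ with $\|x_n\|_A\le 1$ as in (ii), the vectors $u_n=Ax_n$ lie in the unit ball of $\textbf{R}(A^{1/2})$ and satisfy $\|\widetilde{T}u_n\|\to\|\widetilde{T}\|$ and $\langle\widetilde{T}u_n,\widetilde{S}u_n\rangle\to 0$, whence $\widetilde{T}\perp_B\widetilde{S}$ and therefore $T\perp_A^B S$.

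For the reverse implication (i) $\Rightarrow$ (ii) I would invoke the known characterization of Birkhoff--James orthogonality for bounded operators on a Hilbert space: $\widetilde{T}\perp_B\widetilde{S}$ produces unit vectors $\{u_n\}\subset\textbf{R}(A^{1/2})$ with $\|\widetilde{T}u_n\|\to\|\widetilde{T}\|$ and $\langle\widetilde{T}u_n,\widetilde{S}u_n\rangle_{\textbf{R}(A^{1/2})}\to 0$. The remaining task is to replace these by vectors of the form $Ax_n$. Since $R(A)$ is dense in $\textbf{R}(A^{1/2})$, I would select $x_n\in\mathbb H$ with $\|Ax_n-u_n\|_{\textbf{R}(A^{1/2})}<1/n$, rescale so that $\|Ax_n\|_{\textbf{R}(A^{1/2})}=\|x_n\|_A=1$, and use the boundedness of $\widetilde{T}$ and $\widetilde{S}$ to verify that passing from $u_n$ to $Ax_n$ preserves both limits. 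Reading the identities above backwards then yields $\|Tx_n\|_A\to\|T\|_A$ and $\langle Tx_n,Sx_n\rangle_A\to 0$, which is exactly (ii).

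The main obstacle is the reduction itself rather than any isolated computation: one must confirm that the semi-Hilbertian orthogonality is faithfully mirrored in $\textbf{R}(A^{1/2})$, and that the density-based approximation in the final step preserves the norm-attainment limit and the vanishing of the cross term \emph{simultaneously}. Here the inner product in condition (ii) is to be read as the semi-inner product $\langle\cdot,\cdot\rangle_A$, since this is precisely the quantity that arises both from expanding $\|(T+\lambda S)x_n\|_A^2$ and from transporting $\langle\widetilde{T}u_n,\widetilde{S}u_n\rangle_{\textbf{R}(A^{1/2})}$ along $u_n=Ax_n$. If instead a self-contained argument is desired, the difficulty migrates to proving the Hilbert-space operator characterization directly, whose nontrivial direction rests on a sign and continuity argument showing that, were no such sequence to exist, a suitably small multiple $\lambda S$ would strictly decrease the norm and so contradict $T\perp_A^B S$.
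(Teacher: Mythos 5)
The paper never proves this statement; it is quoted verbatim (modulo a typographical slip in (ii), where the inner product should carry the subscript $A$, exactly as you read it) from Zamani \cite[Th.~2.2]{Z19}, so the only meaningful comparison is with Zamani's original argument, which works \emph{intrinsically} in the semi-Hilbertian setting by adapting the norming-sequence/variational technique of \cite{Paul} to the seminorm $\|\cdot\|_A$, with no recourse to the space $\textbf{R}(A^{1/2})$. Your route is genuinely different and, as far as I can see, correct: uniqueness in Lemma \ref{lemma1} makes $T\mapsto\widetilde{T}$ linear, Lemma \ref{lemma2}(i) makes it isometric, hence $\|T+\lambda S\|_A=\|\widetilde{T}+\lambda\widetilde{S}\|_{\mathbb B(\textbf{R}(A^{1/2}))}$ for all $\lambda$ and $T\perp_A^B S$ holds precisely when $\widetilde{T}$ is Birkhoff--James orthogonal to $\widetilde{S}$ in the genuine Hilbert space $\textbf{R}(A^{1/2})$; the bridge identities $\|Ax\|_{\textbf{R}(A^{1/2})}=\|x\|_A$ and $\langle ATx,ASx\rangle_{\textbf{R}(A^{1/2})}=\langle Tx,Sx\rangle_A$ are exactly the computation in Proposition \ref{norm attainment}; and the pull-back of a norming sequence through the density of $R(A)$ in $\textbf{R}(A^{1/2})$, with rescaling justified by $\|Ax_n\|_{\textbf{R}(A^{1/2})}\to 1$ and both limits preserved by the boundedness of $\widetilde{T},\widetilde{S}$, is the same transport machinery the paper itself deploys in the proof of Theorem \ref{Th_tild}. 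What your reduction buys is economy: the semi-Hilbertian theorem becomes a corollary of its $A=I$ case, which is a genuinely earlier published result available in this paper's own bibliography (\cite{Paul}; see also \cite{Bhatia} and \cite{MPS}), so the citation is legitimate rather than circular. What it costs is self-containedness: the entire analytic difficulty, namely producing the sequence in the implication (i)$\Rightarrow$(ii), is outsourced to that classical theorem, a dependency you candidly flag; Zamani's direct proof carries that burden itself and in exchange never needs the $\textbf{R}(A^{1/2})$ formalism. Two small streamlinings for a write-up: the implication (ii)$\Rightarrow$(i) needs no transfer at all, since expanding $\|(T+\lambda S)x_n\|_A^2=\|Tx_n\|_A^2+2\Re\big(\overline{\lambda}\langle Tx_n,Sx_n\rangle_A\big)+|\lambda|^2\|Sx_n\|_A^2$, discarding the last term, and letting $n\to\infty$ gives $\|T+\lambda S\|_A\geq\|T\|_A$ directly; and before quoting the classical result in the other direction you should dispose of the trivial case $\|T\|_A=0$ and note that unit-ball sequences may be normalized to unit-sphere ones.
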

	
	In the following theorem we completely characterize $A$-smoothness of an $A$-bounded operator. 
	\begin{theorem}\label{Th_sing}
		Let $T \in \mathbb B_{A^{1/2}}(\mathbb{H})$ be such that $\|T\|_A\neq 0$. Then $T$ is $A$-smooth if and only if for any $S\in \mathbb B_{A^{1/2}}(\mathbb{H}),$ $W_A(T, S)$ is singleton, where 
		$$W_A(T, S)= \big\{ \sigma \in \mathbb{C}: \|x_n\|_A=1, \|Tx_n\|_A \to \|T\|_A \, \, \mbox{and}  \, \langle Tx_n, Sx_n\rangle_A \to \sigma\big\}.$$
	\end{theorem}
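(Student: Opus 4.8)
The plan is to reduce the statement to the Bhatia--\v{S}emrl type description of $A$-Birkhoff--James orthogonality furnished by Theorem \ref{zamani ch}. The crucial reformulation is that, for $T, S \in \mathbb{B}_{A^{1/2}}(\mathbb{H})$, one has $T \perp_A^B S$ if and only if $0 \in W_A(T,S)$; indeed condition (ii) of Theorem \ref{zamani ch} asserts precisely that $0$ is a subsequential limit of $\langle Tx_n, Sx_n\rangle_A$ along some $A$-norming sequence. Two elementary observations will be used repeatedly. First, $W_A(T,S)$ is always nonempty: take any sequence with $\|x_n\|_A=1$ and $\|Tx_n\|_A\to\|T\|_A$ and pass to a subsequence along which the scalar sequence $\langle Tx_n, Sx_n\rangle_A$ converges, boundedness following from $|\langle Tx_n, Sx_n\rangle_A|\le \|Tx_n\|_A\|Sx_n\|_A\le \|T\|_A\|S\|_A$. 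Second, $W_A(T,T)=\{\|T\|_A^2\}$, since $\langle Tx_n, Tx_n\rangle_A=\|Tx_n\|_A^2\to\|T\|_A^2$ along every such sequence.

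For the sufficiency direction, suppose $W_A(T,S)$ is a singleton for every $S$. Given $S_1, S_2$ with $T\perp_A^B S_1$ and $T\perp_A^B S_2$, the reformulation gives $0\in W_A(T,S_i)$, so $W_A(T,S_i)=\{0\}$ for $i=1,2$. I would then argue that along an \emph{arbitrary} sequence $\{x_n\}$ with $\|x_n\|_A=1$ and $\|Tx_n\|_A\to\|T\|_A$, the bounded scalar sequence $\langle Tx_n, S_i x_n\rangle_A$ has all of its cluster points in $W_A(T,S_i)=\{0\}$, whence $\langle Tx_n, S_i x_n\rangle_A\to 0$. Adding, $\langle Tx_n,(S_1+S_2)x_n\rangle_A\to 0$, so $0\in W_A(T,S_1+S_2)$ and $T\perp_A^B(S_1+S_2)$; thus $T$ is $A$-smooth.

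For the necessity direction I would argue contrapositively: assuming some $W_A(T,S)$ is not a singleton, it contains distinct $\sigma_1,\sigma_2$, and I would manufacture a failure of the defining additivity of $A$-smoothness. The device is to shift by $T$: for $\sigma\in W_A(T,S)$ set $S_\sigma=S-(\sigma/\|T\|_A^2)\,T$; along the norming sequence realizing $\sigma$ one computes $\langle Tx_n, S_\sigma x_n\rangle_A=\langle Tx_n, Sx_n\rangle_A-(\sigma/\|T\|_A^2)\|Tx_n\|_A^2\to \sigma-\sigma=0$, using $W_A(T,T)=\{\|T\|_A^2\}$, so that $T\perp_A^B S_\sigma$. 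Taking $S_1=S_{\sigma_1}$ and $S_2=-S_{\sigma_2}$ (orthogonality being insensitive to sign), we obtain $T\perp_A^B S_1$ and $T\perp_A^B S_2$, while $S_1+S_2=((\sigma_2-\sigma_1)/\|T\|_A^2)\,T=cT$ with $c\neq 0$. Since $\|T+\lambda c T\|_A=|1+\lambda c|\,\|T\|_A$ vanishes at $\lambda=-1/c$ and $\|T\|_A\neq 0$, we have $T\not\perp_A^B(S_1+S_2)$, contradicting $A$-smoothness.

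The only genuinely delicate point is correctly invoking Theorem \ref{zamani ch} to secure the equivalence $T\perp_A^B S\iff 0\in W_A(T,S)$ with the $A$-inner product in place; everything else is bookkeeping. The sufficiency step rests on the elementary fact that a bounded scalar sequence whose cluster points all coincide is convergent, and the necessity step rests entirely on the shift-by-$T$ construction together with the triviality $W_A(T,T)=\{\|T\|_A^2\}$. I expect no essential difference between the real and complex scalar cases, since the construction uses only scalar multiples of $S$ and $T$ and the identity $\|T+\lambda cT\|_A=|1+\lambda c|\,\|T\|_A$.
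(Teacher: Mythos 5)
Your proof is correct and takes essentially the same route as the paper's: both reduce the statement to the Zamani characterization (Theorem \ref{zamani ch}), prove necessity via the shift-by-$T$ perturbations $S-(\sigma/\|T\|_A^2)T$ (you merely avoid the paper's superfluous case split on whether $\alpha$ or $\beta$ vanishes, and phrase the final contradiction via $\|T+\lambda cT\|_A=|1+\lambda c|\|T\|_A$ instead of $\|T\|_A=0$), and prove sufficiency by the same cluster-point/common-subsequence argument. The only caveat is that over $\mathbb{C}$ one has $\langle Tx_n,\sigma Tx_n\rangle_A=\bar{\sigma}\|Tx_n\|_A^2$, so the shift should carry the conjugated coefficient $S-(\bar{\sigma}/\|T\|_A^2)T$ --- a harmless slip that the paper's own proof shares.
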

	
	\begin{proof}
		Since $A$-orthogonality is homogeneous, without loss of generality we may assume that $\|T\|_A= \|S\|_A=1.$ We first prove the necessary part. Suppose on the contrary that $W_A(T, S)$ is not singleton for some $S \in \mathbb B_{A^{1/2}}(\mathbb{H})$. Let $\alpha, \beta \in W_A(T, S)$ be such that $\alpha \neq \beta.$ Therefore, there exist $A$-norming sequnces $\{x_n\}$   and $\{y_n\}$ of $T$ such that $\langle Tx_n, Sx_n\rangle_A \to \alpha$ and $\langle Ty_n, Sy_n\rangle_A \to \beta.$ Suppose that $\alpha, \beta $ both are nonzero.  Now consider $S_1= S- \alpha T$ and $S_2=S-\beta T.$ Then it is easy to see that $\langle Tx_n, S_1x_n\rangle_A \to 0.$ This implies $T \perp_A^B S_1.$ Similarly, we can show that $T \perp_A^B S_2.$ As $T$ is $A$-smooth, it follows that $T \perp_A^B S_1 -S_2,$ i.e., $T \perp_A^B (\beta - \alpha)T.$ Since $\alpha \neq \beta,$  we get $\|T\|_A=0,$ which is a contradiction.
		Now suppose that at least one of $\alpha$ or $\beta$ zero. Let $\alpha=0.$ This implies $T \perp_A^B S.$ From previous argument we can show that $T\perp_A^B S-\beta T.$ Therefore, by $A$-smoothness of $T$ we get $T \perp_A^B S - \beta T - S.$ This implies $\|T\|_A=0,$ which is again a contradiction. \\
		To prove the sufficient part, let $T \perp_A^B S_1$ and $T \perp_A^B S_2$ for some $S_1, S_2 \in \mathbb B_{A^{1/2}}(\mathbb{H}).$ From Theorem \ref{zamani ch} we see that there exists $\{x_n\}$ with $\|x_n\|_A=1$ such that $\|Tx_n\|_A \to \|T\|_A$ and $\langle Tx_n, S_1x_n\rangle_A \to 0.$ Moreover, we have $W_A(T, S_1)$ is singleton. So we obtain that $W_A(T, S_1)=\{0\}$. Similarly, $W_A(T, S_2)=\{0\}.$ Therefore, we can take a common sequence $\{x_{n_k}\}\subset \{x_n\}$ with $\|Tx_{n_k}\|_A\to \|T\|_A$ such that $\langle Tx_{n_k}, S_1x_{n_k}\rangle_A \to 0$ and $\langle Tx_{n_k}, S_2x_{n_k}\rangle_A \to 0.$ This implies that $\langle Tx_{n_k}, (S_1+S_2)x_{n_k}\rangle_A \to 0$ and thus $T \perp_A^B S_1+S_2.$ This shows that $T$ is $A$-smooth.
	\end{proof}
	
	If we consider $A=I\in \mathbb{B}(\mathbb{H})$ then we have the following corollary regarding classical smoothness of operators defined on a Hilbert space, which is also proved in \cite{Roy}.
	
	\begin{cor}\label{SRoy}
		Let $T\in \mathbb{B}(\mathbb{H})$ be nonzero. Then $T$ is smooth if and only if $W(T, S)$ is singleton for every $S \in \mathbb{B}(\mathbb{H}),$ where $$W(T, S)= \big\{ \sigma \in \mathbb{C}: \|x_n\|=1, \|Tx_n\| \to \|T\| \, \, \mbox{and}  \, \langle Tx_n, Sx_n\rangle \to \sigma\big\}.$$
	\end{cor}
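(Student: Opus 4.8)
The plan is to derive Corollary \ref{SRoy} as the specialization of Theorem \ref{Th_sing} to the case $A = I$. First I would record what each ingredient of Theorem \ref{Th_sing} becomes when the underlying positive operator is the identity. Since $A = I$, the semi-inner product $\langle \cdot, \cdot\rangle_A$ is just the original inner product $\langle \cdot, \cdot\rangle$, the semi-norm $\|\cdot\|_A$ is the operator norm $\|\cdot\|$, and every bounded operator is $A$-bounded, so $\mathbb{B}_{A^{1/2}}(\mathbb{H}) = \mathbb{B}(\mathbb{H})$. Consequently the functional set $W_A(T, S)$ collapses to $W(T, S)$, and $A$-Birkhoff--James orthogonality $\perp_A^B$ reduces to the classical Birkhoff--James orthogonality, which we write $\perp^B$.

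Next I would reconcile the two notions of smoothness. With $A = I$, the defining property of $A$-smoothness is exactly the right-additivity of $\perp^B$ at $T$: whenever $T \perp^B S_1$ and $T \perp^B S_2$ one has $T \perp^B S_1 + S_2$. As recalled in the introduction, it is a well-known fact that right-additivity of Birkhoff--James orthogonality at a point of a normed linear space is equivalent to smoothness of that point. Hence $I$-smoothness of $T$ coincides with the classical smoothness of $T$ as an element of $\mathbb{B}(\mathbb{H})$, and the hypothesis $\|T\|_A \neq 0$ becomes simply $T \neq 0$.

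With these identifications the statement of Theorem \ref{Th_sing} reads word for word as the statement of the corollary, so the result follows at once. I do not expect any genuine obstacle; the single point deserving care is the verification that the right-additivity definition of $A$-smoothness specializes to the standard definition of smoothness, and this is furnished by the classical orthogonality characterization of smooth points.
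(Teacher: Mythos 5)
Your proposal is correct and is exactly the paper's route: the authors obtain the corollary by setting $A=I$ in Theorem~\ref{Th_sing}, with the identification of $I$-smoothness (right-additivity of Birkhoff--James orthogonality) with classical smoothness understood via the well-known characterization recalled in the introduction. Your explicit verification of that last identification is the only point the paper leaves implicit, and it is handled correctly.
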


	Now we establish that $A$-compact operators possess a nonempty $A$-norm attainment set whenever $R(A)$ is closed.
	
	\begin{prop}\label{prop; nonempty}
		Let $T \in \mathbb B_{A^{1/2}}(\mathbb{H})$ be $A$-compact and let $R(A)$ be closed. Then $M_A(T)\neq \emptyset.$
	\end{prop}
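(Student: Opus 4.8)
The plan is to transport the problem to the genuine Hilbert space $\textbf{R}(A^{1/2})$, where compact operators are known to attain their norm, and then to pull the resulting norm attainer back to $\mathbb{H}$ by means of Proposition \ref{norm attainment}. The first step is to invoke Lemma \ref{lemma2}(iii): since $T$ is $A$-compact and $R(A)$ is closed, the induced operator $\widetilde{T} \in \mathbb{B}(\textbf{R}(A^{1/2}))$ of Lemma \ref{lemma1} is compact, and by Lemma \ref{lemma2}(i) it satisfies $\|\widetilde{T}\|_{\mathbb{B}(\textbf{R}(A^{1/2}))} = \|T\|_A$. Thus it suffices to produce a norm attainer for $\widetilde{T}$ and then lift it.

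Next I would show $M(\widetilde{T}) \neq \emptyset$, which is the standard fact that a compact operator on a Hilbert space attains its norm. Concretely, I would take a norming sequence $\{u_n\}$ in the closed unit ball of $\textbf{R}(A^{1/2})$ with $\|\widetilde{T} u_n\|_{\textbf{R}(A^{1/2})} \to \|\widetilde{T}\|$; using weak compactness of the unit ball of a Hilbert space, pass to a weakly convergent subsequence $u_{n_k} \rightharpoonup u$, and then use compactness of $\widetilde{T}$ to upgrade this to norm convergence $\widetilde{T} u_{n_k} \to \widetilde{T} u$. This yields $\|\widetilde{T} u\| = \|\widetilde{T}\|$, and (when $\|\widetilde{T}\| \neq 0$) the inequality $\|\widetilde{T}\| = \|\widetilde{T} u\| \le \|\widetilde{T}\|\,\|u\|$ together with weak lower semicontinuity $\|u\| \le 1$ forces $\|u\| = 1$, so $u \in M(\widetilde{T})$. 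The degenerate case $\|T\|_A = 0$ can be dispatched directly: then $M_A(T)$ is the whole $A$-unit sphere, which is nonempty because $A \neq 0$.

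Finally, I would lift $u \in M(\widetilde{T})$ back to $\mathbb{H}$. Here the closedness of $R(A)$ is used a second time: it gives the set-theoretic identification $\textbf{R}(A^{1/2}) = R(A^{1/2}) = R(A)$, which follows from the squeeze $R(A) \subseteq R(A^{1/2}) \subseteq \overline{R(A^{1/2})} = \overline{R(A)} = R(A)$. Consequently the norm attainer can be written as $u = Ax$ for some $x \in \mathbb{H}$, and Proposition \ref{norm attainment} then yields $x \in M_A(T) \cap \overline{R(A)} \subseteq M_A(T)$, whence $M_A(T) \neq \emptyset$. I expect the principal obstacle to be precisely this last identification --- matching the abstract element $u$ of $\textbf{R}(A^{1/2})$ with an element $Ax$ of $R(A)$ so that Proposition \ref{norm attainment} applies --- which is exactly where the hypothesis that $R(A)$ is closed becomes indispensable; without it, $R(A)$ is merely dense in $\textbf{R}(A^{1/2})$ and the norm attainer need not lie in the range of $A$.
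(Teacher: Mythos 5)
Your proof is correct and takes essentially the same route as the paper: transfer to the Hilbert space $\textbf{R}(A^{1/2})$ via Lemma \ref{lemma1} and Lemma \ref{lemma2}, use compactness of $\widetilde{T}$ to get a norm attainer there, and pull it back to $\mathbb{H}$ using the closedness of $R(A)$. The only differences are expository: you fill in details the paper leaves implicit (the weak-compactness argument for norm attainment of a compact operator, the set identification $\textbf{R}(A^{1/2}) = R(A^{1/2}) = R(A)$, and the trivial case $\|T\|_A = 0$), and you conclude by citing Proposition \ref{norm attainment} where the paper simply redoes that computation directly.
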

	
	\begin{proof}
		Since $T$ is $A$-compact and $R(A)$ is closed, from Lemma \ref{lemma1} and Lemma \ref{lemma2} (iii), there exists a unique $\widetilde{T}\in \mathbb B(\textbf{R}(A^{1/2}))$ such that $\widetilde{T}$ is compact. Note that $\|T\|_A=\|\widetilde{T}\|_{\textbf{R}(A^{1/2})}.$ As $\textbf{R}(A^{1/2})$ is a Hilbert space and $\widetilde{T}$ is compact, one can easily observe that there exists $y_0 \in \textbf{R}(A^{1/2})$ with $\|y_0\|_{\textbf{R}(A^{1/2})}=1$ such that $\|\widetilde{T}y_0\|_{\textbf{R}(A^{1/2})}=\|\widetilde{T}\|_{\textbf{R}(A^{1/2})}.$ Since $R(A)$ is closed, there exists $x_0 \in \mathbb{H}$ such that $Ax_0=y_0.$ Clearly, $\|x_0\|_A=\|y_0\|_{\textbf{R}(A^{1/2})}=1.$ So we have $\|\widetilde{T}Ax_0\|_{\textbf{R}(A^{1/2})}=\|\widetilde{T}\|_{\textbf{R}(A^{1/2})}.$ In other words, $\|Tx_0\|_A=\|T\|_A.$ This shows that $M_A(T) \neq \emptyset.$
	\end{proof}
	
	We show by an example that the closedness of $R(A)$ cannot be omitted from the above proposition.
	
	\begin{example}
		Let $A: \ell_2 \to \ell_2$ be defined as $Ae_n=\frac{1}{n}e_n,$ where $\{e_n\}$ denotes the standard orthonormal basis of $\ell_2$. Clearly, $A$ is nonzero positive operator. Also, $R(A)$ is not closed. Consider $T\in \mathbb{B}(\ell_2)$ defined as $Te_n=\frac{1}{n^{3/2}}e_1.$  We first show that $T$ is $A$-bounded. Given any $x = (x_n) \in \ell_2,$ $\|x\|_A = \|A^{1/2}x\|=\left(\sum\limits_{n=1}^\infty \frac{|x_n|^2}{n}\right)^{1/2}.$ Now 
		\begin{eqnarray*}
			\|Tx\|_A^2 = \big\|\sum_{n=1}^\infty x_n Te_n\big\|_A^2
			&=& \big\|\sum_{n=1}^\infty \frac{x_n }{n^{3/2}}e_1\big\|_A^2\\
			&=&\big\|A^{1/2}\sum_{n=1}^\infty \frac{x_n }{n^{3/2}}e_1\big\|^2\\
			&=&\big\|\sum_{n=1}^\infty \frac{x_n }{n^{3/2}}e_1\big\|^2\\
			&=&\bigg|\sum_{n=1}^\infty \frac{x_n }{n^{3/2}}\bigg|^2\\
			&\leq& \bigg(\sum_{n=1}^\infty \frac{1}{n^2}\bigg)\bigg(\sum_{n=1}^\infty\frac{|x_n|^2}{n}\bigg)= \frac{\pi^2}{6}\|x\|_A^2.
		\end{eqnarray*}
		Thus, we get $\|Tx\|_A\leq \frac{\pi}{\sqrt{6}}\|x\|_A.$ This shows that $T$ is $A$-bounded, i.e., $T\in \mathbb B_{A^{1/2}}(\ell_2).$ Next, we claim that $\|T\|_A=\frac{\pi}{\sqrt{6}}.$ Note that 
		\begin{eqnarray}
			\|T\|_A= \sup_{\|x\|_A=1}\|Tx\|_A= \sup_{\|x\|_A=1}\left|\sum_{n=1}^\infty \frac{x_n }{n^{3/2}}\right|.
		\end{eqnarray}
		Let us 	consider that $y= (y_n) = (\frac{x_n}{\sqrt{n}})\in \ell_2$ and $z=(\frac{1}{n})\in \ell_2.$ Observe that $y=A^{1/2}x$ and therefore, $\|x\|_A=1\iff \|y\|=1.$ Then following the above relation we get 
		\begin{eqnarray} \|T\|_A= \sup_{\|y\|=1} |\langle y, z\rangle| = \|z\| = \frac{\pi}{\sqrt{6}}.\end{eqnarray} If possible let there exists $x=(x_n)\in \ell_2$ with $\|x\|_A=1$ such that $\|Tx\|_A=\frac{\pi}{\sqrt{6}}.$ Now, $\left|\sum\limits_{n=1}^\infty \frac{x_n }{n^{3/2}}\right|=|\langle y, z\rangle|=\|z\|=\|y\|\|z\|.$ Now by the equality condition of Cauchy-Schwarz inequality, we have $y=\lambda z,$ for some positive scalar $\lambda.$ Thus we obtain $x=(x_n)=(\frac{\lambda}{\sqrt{n}}).$ But note that $x\notin \ell_2.$ This shows that  $M_A(T)=\emptyset.$  On the other hand, since $R(T)$ is finite-dimensional, one can easily check that $T$ is $A$-compact. Thus, being $A$-compact, $T$ does not attain its $A$-norm whenever $R(A)$ is not closed.
	\end{example}
	
	In the following theorem, we completely characterize the $A$-smoothness of $A$-bounded operators provided that $M_A(T)$ is nonempty. This will lead to the characterization of the $A$-smoothness of $A$-compact operators.			
	
	\begin{theorem}\label{A-smooth}
		Let $T \in \mathbb B_{A^{1/2}}(\mathbb{H})$ be such that $\|T\|_A\neq 0$ and let $M_A(T)\neq \emptyset.$ Then the following are equivalent:
		\begin{itemize}
			\item[(i)] $T$ is $A$-smooth.
			\item[(ii)] $M_A(T)\cap \overline{R(A)}=\{\mu x_0: |\mu|=1\},$ for some $x_0 \in S_{\mathbb{H}(A)}$ and $\sup\{\|Ty\|_A: \langle x_0, y\rangle_A=0\, \, \mbox{with}\,\, \|y\|_A=1\}< \|T\|_A.$ 
		\end{itemize}
	\end{theorem}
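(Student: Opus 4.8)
The plan is to derive both implications from the singleton criterion of Theorem \ref{Th_sing}, transporting the few genuinely Hilbertian facts to the auxiliary space $\textbf{R}(A^{1/2})$ via the correspondence $T\mapsto\widetilde T$. Two reductions will be used throughout. First, writing $x=x'+x''$ with $x'\in\overline{R(A)}$ and $x''\in N(A)$, one checks $\|x\|_A=\|x'\|_A$ and $\|Tx\|_A=\|Tx'\|_A$, and since $\|Tx''\|_A=\|Sx''\|_A=0$ the Cauchy--Schwarz inequality for $\langle\cdot,\cdot\rangle_A$ kills every mixed term, giving $\langle Tx,Sx\rangle_A=\langle Tx',Sx'\rangle_A$; hence any $A$-norming sequence may be replaced by one lying in $\overline{R(A)}$ without changing the limits in $W_A(T,S)$, and $M_A(T)\neq\emptyset$ forces $M_A(T)\cap\overline{R(A)}\neq\emptyset$. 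Second, I record the orthogonality relation: if $x_0\in M_A(T)\cap\overline{R(A)}$ with $\|x_0\|_A=1$ and $\langle z,x_0\rangle_A=0$, then $\langle Tx_0,Tz\rangle_A=0$. Indeed, by Proposition \ref{norm attainment} $Ax_0\in M(\widetilde T)$, so $Ax_0$ is an eigenvector of $\widetilde T^*\widetilde T$ with eigenvalue $\|\widetilde T\|^2=\|T\|_A^2$; using $\langle u,v\rangle_A=\langle Au,Av\rangle_{\textbf{R}(A^{1/2})}$ one gets $\langle Tx_0,Tz\rangle_A=\langle\widetilde T Ax_0,\widetilde T Az\rangle_{\textbf{R}(A^{1/2})}=\|T\|_A^2\langle Ax_0,Az\rangle_{\textbf{R}(A^{1/2})}=\|T\|_A^2\langle x_0,z\rangle_A=0$.

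For $(ii)\Rightarrow(i)$, I would fix $S\in\mathbb B_{A^{1/2}}(\mathbb H)$ and an $A$-norming sequence, which by the first reduction may be taken with $x_n\in\overline{R(A)}$, $\|x_n\|_A=1$, $\|Tx_n\|_A\to\|T\|_A$. Writing $x_n=a_nx_0+w_n$ with $\langle w_n,x_0\rangle_A=0$, so $|a_n|^2+\|w_n\|_A^2=1$, the orthogonality relation yields the Pythagorean splitting $\|Tx_n\|_A^2=|a_n|^2\|T\|_A^2+\|Tw_n\|_A^2\le\|T\|_A^2-\|w_n\|_A^2(\|T\|_A^2-M^2)$, where $M:=\sup\{\|Ty\|_A:\langle y,x_0\rangle_A=0,\ \|y\|_A=1\}<\|T\|_A$. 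This forces $\|w_n\|_A\to0$ and $|a_n|\to1$. Since every term of $\langle Tx_n,Sx_n\rangle_A$ other than $|a_n|^2\langle Tx_0,Sx_0\rangle_A$ carries a factor $w_n$ and is thus $O(\|w_n\|_A)$, we obtain $\langle Tx_n,Sx_n\rangle_A\to\langle Tx_0,Sx_0\rangle_A$. Hence $W_A(T,S)=\{\langle Tx_0,Sx_0\rangle_A\}$ for every $S$, and $T$ is $A$-smooth by Theorem \ref{Th_sing}.

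For $(i)\Rightarrow(ii)$, I would argue contrapositively inside Theorem \ref{Th_sing} using the single test operator $Sv:=\langle v,x_0\rangle_A\,Tx_0$, which is $A$-bounded ($\|Sv\|_A\le\|T\|_A\|v\|_A$) and satisfies $\langle Tx_0,Sx_0\rangle_A=\|T\|_A^2$. If $M_A(T)\cap\overline{R(A)}$ contained some $y_0$ that is not a unimodular multiple of $x_0$, then $Ax_0,Ay_0$ would be independent top eigenvectors of $\widetilde T^*\widetilde T$, and Gram--Schmidt within this eigenspace produces $\hat y_0\in M_A(T)\cap\overline{R(A)}$ with $\langle\hat y_0,x_0\rangle_A=0$; then $\langle T\hat y_0,S\hat y_0\rangle_A=0\ne\|T\|_A^2$, so $\{0,\|T\|_A^2\}\subseteq W_A(T,S)$ contradicts $A$-smoothness, yielding the structure $\{\mu x_0:|\mu|=1\}$ (the reverse inclusion being immediate). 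Similarly, if the residual supremum equalled $\|T\|_A$, there would be an $A$-norming sequence $\{y_n\}$ with $\|y_n\|_A=1$ and $\langle y_n,x_0\rangle_A=0$; then $\langle Ty_n,Sy_n\rangle_A=0$ while the constant sequence $x_0$ contributes $\|T\|_A^2$, so again $W_A(T,S)$ fails to be a singleton, a contradiction. Thus both parts of (ii) hold.

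The technical heart, and the step I expect to demand the most care, is the orthogonality relation $\langle Tx_0,Tz\rangle_A=0$ together with the bookkeeping for the $N(A)$-components. Because $\langle\cdot,\cdot\rangle_A$ is only a semi-inner product, the Pythagorean identity $\|Tx_n\|_A^2=|a_n|^2\|T\|_A^2+\|Tw_n\|_A^2$, the reduction of arbitrary norming sequences to $\overline{R(A)}$, and the eigenvector property of norm-attaining vectors must all be justified through the isometry $u\mapsto Au$ from $(\overline{R(A)},\|\cdot\|_A)$ into $\textbf{R}(A^{1/2})$, where $\widetilde T$ is a genuine Hilbert-space operator and Lemma \ref{lemma2}(i) together with Proposition \ref{norm attainment} are available.
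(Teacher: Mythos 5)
Your proof is correct, and it takes a genuinely different route in the forward implication while sharing the skeleton of the paper's argument in the reverse one. For $(i)\Rightarrow(ii)$ the paper never leaves $\mathbb{H}$: it quotes \cite[Th.~2.4]{SSP_AFA_21} to know that $M_A(T)\cap\overline{R(A)}$ is the $A$-unit sphere of a subspace, and then from a decomposition $\mathbb{H}=H_0\oplus H_0^{\perp_A}$ it manufactures \emph{two} operators $S_1,S_2$ with $S_1+S_2=T$, each $A$-Birkhoff--James orthogonal to $T$, so smoothness would force $T\perp_A^B T$ and hence $\|T\|_A=0$. You instead work with the single rank-one test operator $Sv=\langle v,x_0\rangle_A\,Tx_0$ and the singleton criterion of Theorem \ref{Th_sing}, replacing the citation of \cite{SSP_AFA_21} by a Gram--Schmidt step inside the top eigenspace of $\widetilde{T}^{*}\widetilde{T}$ in $\textbf{R}(A^{1/2})$; this buys self-containedness (modulo Proposition \ref{norm attainment} and Lemma \ref{lemma2}(i)) at the price of passing to the auxiliary Hilbert space. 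One line is worth adding there: if $Ay_0=cAx_0$ then $y_0-cx_0\in N(A)\cap\overline{R(A)}=\{0\}$, so $y_0=cx_0$ with $|c|=\|y_0\|_A=1$, contradicting the choice of $y_0$ --- that is what makes $Ax_0,Ay_0$ linearly independent before you orthogonalize. For $(ii)\Rightarrow(i)$ both arguments decompose a norming sequence along $x_0$, use the relation $\langle x_0,y\rangle_A=0\Rightarrow\langle Tx_0,Ty\rangle_A=0$ to get the Pythagorean splitting, and deduce $\|w_n\|_A\to0$ from the strict residual supremum; the difference is that the paper cites \cite[Th.~2.6]{MBPS_Arx} for this relation and verifies additivity of orthogonality directly from Theorem \ref{zamani ch}, whereas you prove the relation yourself via the eigenvector property $\widetilde{T}^{*}\widetilde{T}Ax_0=\|\widetilde{T}\|^2Ax_0$ of norm-attaining vectors and again conclude through Theorem \ref{Th_sing}. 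Your explicit reduction of norming sequences to $\overline{R(A)}$ is sound but is not needed in the paper's bookkeeping, since its decomposition $z_n=\alpha_n x_0+h_n$ with $h_n\in\{y:\langle x_0,y\rangle_A=0\}$ silently absorbs the $N(A)$-component into $h_n$. In short: the paper's proof is more elementary and stays in $\mathbb{H}$ but leans on two external results; yours trades those citations for standard Hilbert-space facts transported through $T\mapsto\widetilde{T}$.
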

	
	\begin{proof}
		(i) $\implies$ (ii):  Suppose on the contrary that $z_1, z_2 \in M_A(T)\cap \overline{R(A)}$ are such that $z_1 \neq \lambda z_2$ for any $\lambda\in \mathbb{C}.$ From \cite[Th. 2.4]{SSP_AFA_21} we note that $M_A(T)\cap \overline{R(A)}$  is $A$-unit sphere of some subspace of $\mathbb{H}.$  Thus, without loss of generality we may assume that $\langle z_1, z_2\rangle_A=0.$ Let $H_0=span\{z_1, z_2\}$ and $H_0^{\perp_A}=\{y\in \mathbb{H}: \langle x, y\rangle_A=0, x\in H_0\}.$ Observe that $H_0\cap H_0^{\perp_A}=\{0\}.$ So we can write $\mathbb{H}= H_0 \oplus H_0^{\perp_A}.$ Given any $z \in \mathbb{H}$ we write $z=\alpha z_1+ \beta z_2 +h$ for some $\alpha, \beta \in \mathbb{C}$ and $h\in H_0^{\perp_A}.$ Now we define $S_1, S_2\in \mathbb B(\mathbb{H})$ as $S_1z=\alpha_1 Tz_1$ and $S_2z=\beta Tz_2+Th.$ As $T$ is $A$-bounded, then clearly $S_1, S_2 \in \mathbb B_{A^{1/2}}(\mathbb{H})$. Note that $S_1z_2=0$ and $S_2z_1=0.$ This implies that $T \perp_A^B S_1$ and $T\perp_A^B S_2.$ Since $S_1+S_2=T,$ we arrive at a contradiction that $T$ is $A$-smooth. Thus, $M_A(T)\cap \overline{R(A)}=\{\mu x_0: |\mu|=1\}$ for some $x_0\in S_{\mathbb{H}(A)}.$ \\
		Now again suppose on the contrary that $\sup\{\|Ty\|_A: \langle x_0, y\rangle_A=0\, \, \mbox{with}\,\, \|y\|_A=1\}= \|T\|_A.$  For any $z\in \mathbb{H}$ we write $z= \alpha x_0+ h$ where $\alpha \in \mathbb{C}$ and $h\in H_0:=\{y\in \mathbb{H}: \langle x_0, y\rangle_A=0\}.$ Define $S_1, S_2$ as $S_1z=\alpha Tx_0$ and $S_2z=Th$ for all $z\in \mathbb{H}.$ From our assumption there exists $\{h_n\}\subset H_0$ with $\|h_n\|_A=1$ satisfying $\lim \|Th_n\|_A \to \|T\|_A$, we have $\langle Th_n, S_1h_n\rangle_A =0.$ This implies $T \perp_A^B S_1$. Whereas, $\langle Tx_0, S_2x_0\rangle_A=0$ implies $T \perp_A^B S_2.$ But observe that $T \not\perp_A^B S_1+S_2=T,$ which contradicts the fact that $T$ is $A$-smooth. This completes the proof of the theorem.\\

		(ii)$\implies$ (i):  Let $S_1, S_2 \in \mathbb B_{A^{1/2}}(\mathbb{H})$ be such that $T \perp_A^B S_i$ for each $i\in \{1, 2\}.$ Now we claim that $\langle Tx_0, S_ix_0\rangle_A=0$ for each $i\in \{1, 2\}.$ We only show it for $S_1$ as the case for $S_2$ follows similarly. From Theorem \ref{zamani ch} we note that there exists $\{z_n\} \subset S_{\mathbb{H}(A)}$ such that $\|Tz_n\|_A \to \|T\|_A$ and $\langle Tz_n, S_1 z_n\rangle_A\to 0.$ Suppose that $H_0=\{y\in \mathbb{H}: \langle x_0, y\rangle_A=0\}.$ Note that $x_0 \notin H_0$ otherwise $\|x_0\|_A=0.$ Therefore, we may write $z_n=\alpha_n x_0 + h_n,$ where $\alpha_n\in \mathbb{C}$ and $h_n \in H_0.$ As $\|z_n\|_A=1,$ it is easy to see that $|\alpha_n|\leq 1$ and $\|h_n\|_A^2=1-|\alpha_n|^2.$ Since $\{\alpha_n\} \subset \mathbb{C}$ is bounded, without loss of generality we can consider $\alpha_n \to \alpha_0$ with $|\alpha_0|\leq 1.$ Next, we show that $\lim\|h_n\|_A=0$. On the contrary suppose that $\lim\|h_n\|_A\neq0$. Without loss of generality, let $\|h_n\|_A\neq 0$ for all $n\in\mathbb N.$  From \cite[Th. 2.6]{MBPS_Arx}, whenever $x_0 \in M_A(T),$ we have $\langle x_0, y\rangle_A=0 \implies \langle Tx_0, Ty \rangle_A=0.$ Now
		\begin{eqnarray*}
			\lim \|Tz_n\|_A^2 &=& \lim \langle Tz_n, Tz_n\rangle_A\\
			&=& \lim\langle T(\alpha_n x_0+ h_n), T(\alpha_n x_0+ h_n)\rangle_A \\
			&=& \lim |\alpha_n|^2\|T\|_A^2 + \lim \|Th_n\|_A^2.\\
		\end{eqnarray*}	
		From the above equality we get $\lim\|Th_n\|_A^2 = (1- |\alpha_0|^2)\|T\|_A^2.$ Previously we already have $\|h_n\|_A^2 = 1- |\alpha_n|^2$ which implies $1-|\alpha_0|^2 = \lim \|h_n\|_A^2.$ So, we get $\lim \|Th_n\|_A^2 = \lim\|h_n\|_A^2\|T\|_A^2.$ This implies that $\lim \left\|T\left(\frac{h_n}{\|h_n\|_A}\right)\right\|_A = \|T\|_A,$ which contradicts our hypothesis. Thus, we have $\lim\|h_n\|_A=0$ and so $|\alpha_0|=1.$ Therefore, $\|Tz_n\|_A\to \|Tx_0\|_A=\|T\|_A$ will obtain $\langle Tz_n, S_1z_n\rangle_A \to 0 \implies \langle Tx_0, S_1x_0\rangle_A=0.$ This completes the proof of our claim. Now one can easily observe that $T \perp_A^B S_1$ and $T\perp_A^B S_2$ will imply $T\perp_A^B S_1+ S_2.$ This means $T$ is $A$-smooth.

	\end{proof}
	
	The characterization of Birkhoff-James orthogonality of operators in $\mathbb{B}(\mathbb
	H),$ popularly known as Bhatia-\v Semrl Theorem was proved in \cite{Bhatia} and  \cite{Paul} independently.  Zamani  generalizes this Bhatia-\v Semrl Theorem in the framework of semi-Hilbertian structure. An operator $T\in \mathbb B_{A^{1/2}}(\mathbb{H})$ is said to satisfy the Bhatia-\v Semrl Property (in short, B\v S Property) if for any $S \in \mathbb B_{A^{1/2}}(\mathbb{H}),$ there exists $x\in M_A(T)$ such that $T\perp_A^B S \iff \langle Tx, Sx\rangle_A=0.$ In this thread, we observe a connection between $A$-smooth operator and Bhatia-\v Semrl Property. 
	
	\begin{theorem}\label{BS}
		Let $T\in \mathbb B_{A^{1/2}}(\mathbb{H})$ be such that $\|T\|_A\neq 0$ and $M_A(T)\neq \emptyset.$ Then $T$ is $A$-smooth if and only if the following conditions hold:
		\begin{itemize}
			\item[(i)] $M_A(T)\cap \overline{R(A)}=\{\mu x_0: |\mu|=1\}.$
			\item[(ii)]  For any $S\in \mathbb B_{A^{1/2}}(\mathbb{H})$, $T\perp_A^B S\iff \langle Tx_0, Sx_0\rangle_A=0$, i.e., $T$ satisfies B\v S Property.
		\end{itemize}
	\end{theorem}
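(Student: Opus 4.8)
The plan is to prove Theorem \ref{BS} by leveraging the characterization already established in Theorem \ref{A-smooth}, since both theorems share the same hypotheses ($\|T\|_A \neq 0$ and $M_A(T) \neq \emptyset$) and both aim to characterize $A$-smoothness. The natural strategy is to show that the two-condition characterization here is equivalent to the condition in Theorem \ref{A-smooth}(ii). Condition (i) of the present theorem is literally the first half of Theorem \ref{A-smooth}(ii), so the real work is to show that, under the assumption that $M_A(T) \cap \overline{R(A)} = \{\mu x_0 : |\mu| = 1\}$, the B\v{S} Property (condition (ii) here) is equivalent to the gap condition $\sup\{\|Ty\|_A : \langle x_0, y\rangle_A = 0,\ \|y\|_A = 1\} < \|T\|_A$.

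First I would prove the forward direction ($A$-smooth $\Rightarrow$ (i) and (ii)). Theorem \ref{A-smooth} already gives both (i) and the strict-gap inequality. To extract the B\v{S} Property, I would inspect the proof of (ii)$\Rightarrow$(i) in Theorem \ref{A-smooth}: that argument shows precisely that under the strict-gap hypothesis, whenever $T \perp_A^B S$ there is an $A$-norming sequence $z_n = \alpha_n x_0 + h_n$ with $\|h_n\|_A \to 0$ and $\alpha_n \to \alpha_0$ with $|\alpha_0| = 1$, forcing $\langle Tx_0, Sx_0\rangle_A = 0$. The reverse implication, $\langle Tx_0, Sx_0\rangle_A = 0 \Rightarrow T \perp_A^B S$, is immediate by taking the constant norming sequence $x_n = x_0 \in M_A(T)$ and invoking Theorem \ref{zamani ch}. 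This establishes the B\v{S} Property with the distinguished vector $x_0$.

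For the converse ($A$-smooth follows from (i) and (ii)), I would argue that conditions (i) and (ii) together imply the strict-gap condition of Theorem \ref{A-smooth}(ii), after which $A$-smoothness follows directly from that theorem. Here is where the main obstacle lies: I must rule out that the supremum over the $A$-orthogonal complement of $x_0$ equals $\|T\|_A$. The natural approach is contrapositive. Suppose the gap condition fails, so there is a sequence $\{h_n\}$ with $\langle x_0, h_n\rangle_A = 0$, $\|h_n\|_A = 1$, and $\|Th_n\|_A \to \|T\|_A$. I would then construct an operator $S$ that is $A$-Birkhoff-James orthogonal to $T$ along this sequence but for which $\langle Tx_0, Sx_0\rangle_A \neq 0$, contradicting the B\v{S} Property. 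A convenient candidate is $S = T$ restricted appropriately, or more carefully, an operator built so that $\langle Th_n, Sh_n\rangle_A \to 0$ (giving $T \perp_A^B S$ via Theorem \ref{zamani ch}) while $Sx_0$ is chosen along $Tx_0$ so that $\langle Tx_0, Sx_0\rangle_A = \|T\|_A^2 \neq 0$. Using the decomposition $\mathbb{H} = \mathrm{span}\{x_0\} \oplus \{x_0\}^{\perp_A}$ as in the proof of Theorem \ref{A-smooth}, defining $S(\alpha x_0 + h) = \alpha Tx_0$ yields $\langle Th_n, Sh_n\rangle_A = 0$ for all $n$, hence $T \perp_A^B S$, yet $\langle Tx_0, Sx_0\rangle_A = \|Tx_0\|_A^2 = \|T\|_A^2 \neq 0$, contradicting (ii).

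The delicate point to handle with care is the interplay between the B\v{S} Property as stated (which asserts existence of \emph{some} $x \in M_A(T)$ witnessing orthogonality for each $S$) and the sharper statement that the single vector $x_0$ works uniformly. Condition (i) guarantees that $x_0$ is essentially the only candidate in $M_A(T) \cap \overline{R(A)}$ up to unimodular scalars, which is what allows the witness to be taken as $x_0$; I would need to confirm that the B\v{S} witness can indeed be normalized into $\overline{R(A)}$, appealing to Proposition \ref{norm attainment} and the structure of $M_A(T)$. Once this identification is secured, the equivalence with the strict-gap condition closes the argument cleanly via Theorem \ref{A-smooth}.
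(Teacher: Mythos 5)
Your proposal is correct, but it follows a genuinely different route from the paper's proof. The paper argues directly from the definition of $A$-smoothness: for the necessity of (ii), given $T\perp_A^B S$ with $\langle Tx_0,Sx_0\rangle_A\neq 0$, it forms $S'=T-cS$ with the scalar $c$ chosen so that $\langle Tx_0,S'x_0\rangle_A=0$; since $x_0\in M_A(T)$, Theorem \ref{zamani ch} (applied with the constant sequence $x_0$) gives $T\perp_A^B S'$, and then $A$-smoothness together with homogeneity of $\perp_A^B$ forces $T\perp_A^B (S'+cS)=T$, a contradiction. For the sufficiency, the paper needs only condition (ii) and two lines: $T\perp_A^B S_1$ and $T\perp_A^B S_2$ give $\langle Tx_0,S_ix_0\rangle_A=0$, hence $\langle Tx_0,(S_1+S_2)x_0\rangle_A=0$, and the reverse implication of (ii) yields $T\perp_A^B (S_1+S_2)$. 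You instead reduce the whole statement to Theorem \ref{A-smooth} by proving that, in the presence of (i), the B\v S Property is equivalent to the strict-gap condition $\sup\{\|Ty\|_A:\langle x_0,y\rangle_A=0,\ \|y\|_A=1\}<\|T\|_A$: one direction re-runs the concentration argument inside the paper's proof of (ii)$\Rightarrow$(i) of Theorem \ref{A-smooth} (norming sequences must collapse onto $x_0$), and the other uses the operator $S(\alpha x_0+h)=\alpha Tx_0$ (which is indeed $A$-bounded --- it is the same operator the paper itself employs inside Theorem \ref{A-smooth}) to violate (ii) when the gap fails. Both of your directions are sound. What the paper's proof buys is brevity and self-containedness; in particular, its sufficiency argument shows that condition (ii) alone, the existence of a uniform B\v S witness, already implies $A$-smoothness, with no appeal to (i), to the gap condition, or to any norm-attainment machinery. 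What your proof buys is the explicit equivalence, under (i), between the B\v S Property and the quantitative gap condition, which ties Theorem \ref{BS} tightly to Theorem \ref{A-smooth}; the cost is that your necessity direction cannot cite Theorem \ref{A-smooth} as a black box but must inline, or restate as a separate lemma, the argument occurring in its proof. Finally, the ``delicate point'' you raise about normalizing the B\v S witness into $\overline{R(A)}$ is moot in your own scheme: your forward direction produces the witness $x_0$ explicitly, and your backward direction takes (ii), with that fixed $x_0$, as a hypothesis, so no appeal to Proposition \ref{norm attainment} is actually required.
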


	\begin{proof}
		To prove the necessary part, we only prove (ii) as (i) follows directly from Theorem \ref{A-smooth}. To prove ``$\implies$'', suppose on the contrary there exists $S\in \mathbb B_{A^{1/2}}(\mathbb{H})$ such that $T\perp_A^B S$ but $\langle Tx_0, Sx_0\rangle_A \neq0.$  Consider $S'= T- \frac{\|T\|_A^2}{\langle Tx_0, Sx_0\rangle_A}S \in \mathbb B_{A^{1/2}}(\mathbb{H}).$ One can clearly observe that $T\perp_A^B S'.$ As $T$ is $A$-smooth, we obtain that $T\perp_A^B (S' + \frac{\|T\|_A^2}{\langle Tx_0, Sx_0\rangle_A}S),$ i.e., $T \perp_A^B T,$ which is a contradiction. The reverse implication  trivially follows. \\
		Next we show the sufficient part. Suppose that $S_1, S_2 \in \mathbb B_{A^{1/2}}(\mathbb{H})$ are such that $T\perp_A^B S_1$ and $T \perp_A^B S_2.$ Then we  have $\langle Tx_0, S_ix_0\rangle_A=0$ for each $i\in \{1, 2\}.$ From this we obtain $\langle Tx_0, (S_1+S_2)x_0\rangle_A=0.$ This proves that $T\perp_A^B S_1+S_2,$ i.e., $T$ is $A$-smooth.This completes the proof. 
	\end{proof}

	Next, we characterize the $A$-smoothness of $A$-compact operators by applying Theorem \ref{A-smooth}. 
	\begin{theorem}\label{cor; compact}
		Let $T $ be $A$-compact and let $ R(A)$ be closed. Then the following are equivalent:
		\begin{itemize}
			\item[(i)] $T$ is $A$-smooth.
			\item[(ii)] $M_A(T)\cap {R(A)}=\{\mu x_0: |\mu|=1\},$ for some $x_0 \in S_{\mathbb{H}(A)}.$
		\end{itemize}
	\end{theorem}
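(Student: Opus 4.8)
The plan is to reduce the whole statement to Theorem \ref{A-smooth}, treating the two implications separately and isolating the genuine extra content of the compact case. Throughout we take $\|T\|_A\neq 0$, as is implicit in the notion of $A$-smoothness. Since $R(A)$ is closed we have $\overline{R(A)}=R(A)$, so condition (ii) here is literally the first of the two conditions appearing in Theorem \ref{A-smooth}(ii); thus the theorem amounts to showing that, for $A$-compact $T$, the singleton condition on $M_A(T)\cap\overline{R(A)}$ already forces the strict-inequality clause $\sup\{\|Ty\|_A:\langle x_0,y\rangle_A=0,\ \|y\|_A=1\}<\|T\|_A$. First I would record that, by Proposition \ref{prop; nonempty}, $A$-compactness together with closedness of $R(A)$ guarantees $M_A(T)\neq\emptyset$, so Theorem \ref{A-smooth} is applicable.

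For the implication (i)$\Rightarrow$(ii) there is nothing new: $A$-smoothness and $M_A(T)\neq\emptyset$ feed directly into Theorem \ref{A-smooth}(i)$\Rightarrow$(ii), and replacing $\overline{R(A)}$ by $R(A)$ gives exactly (ii).

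The substance is in (ii)$\Rightarrow$(i). Assume $M_A(T)\cap\overline{R(A)}=\{\mu x_0:|\mu|=1\}$ and suppose, for contradiction, that the supremum above equals $\|T\|_A$. Then I can choose $y_n$ with $\|y_n\|_A=1$, $\langle x_0,y_n\rangle_A=0$, and $\|Ty_n\|_A\to\|T\|_A$. The key device is to pass to the Hilbert space $\textbf{R}(A^{1/2})$: setting $u_n=Ay_n=W_Ay_n$, Lemma \ref{lemma1} and the computations in Proposition \ref{norm attainment} give $\|u_n\|_{\textbf{R}(A^{1/2})}=\|y_n\|_A=1$ and $\|\widetilde{T}u_n\|_{\textbf{R}(A^{1/2})}=\|ATy_n\|_{\textbf{R}(A^{1/2})}=\|Ty_n\|_A\to\|T\|_A=\|\widetilde{T}\|_{\mathbb B(\textbf{R}(A^{1/2}))}$. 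Since $R(A)$ is closed, $\widetilde{T}$ is compact by Lemma \ref{lemma2}(iii); as $\textbf{R}(A^{1/2})$ is a Hilbert space I extract a weakly convergent subsequence $u_{n_k}\rightharpoonup u$, whence $\widetilde{T}u_{n_k}\to\widetilde{T}u$ in norm by compactness. Weak lower semicontinuity of the norm gives $\|u\|\leq 1$, while $\|\widetilde{T}u\|=\lim\|\widetilde{T}u_{n_k}\|=\|\widetilde{T}\|$ forces $\|u\|=1$; hence $u\in M(\widetilde{T})$.

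To close the argument I translate $u$ back to $\mathbb H$ and exploit the orthogonality. Because $R(A)$ is closed, $u\in R(A)$, and choosing the preimage in $\overline{R(A)}$ (possible since $\mathbb H=N(A)\oplus\overline{R(A)}$ for the positive operator $A$) I obtain $x_*\in\overline{R(A)}$ with $Ax_*=u$ and $\|x_*\|_A=\|u\|=1$; Proposition \ref{norm attainment} then yields $x_*\in M_A(T)\cap\overline{R(A)}$, so $x_*=\mu x_0$ for some $|\mu|=1$. On the other hand, the identity $\langle Ax_0,u_n\rangle_{\textbf{R}(A^{1/2})}=\langle x_0,y_n\rangle_A=0$ survives the weak limit, giving $\langle x_0,x_*\rangle_A=\langle Ax_0,u\rangle_{\textbf{R}(A^{1/2})}=0$; but $|\langle x_0,\mu x_0\rangle_A|=|\mu|\,\|x_0\|_A^2=1\neq 0$, a contradiction. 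Hence the strict inequality holds and Theorem \ref{A-smooth}(ii)$\Rightarrow$(i) gives that $T$ is $A$-smooth. The main obstacle is precisely this direction: one must verify the strict-inequality clause of Theorem \ref{A-smooth}, and the only workable route seems to be to move to $\textbf{R}(A^{1/2})$, where compactness of $\widetilde{T}$ converts a maximizing sequence into an honest maximizer while weak convergence preserves $A$-orthogonality to $x_0$.
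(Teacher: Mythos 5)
Your proof is correct, and its skeleton is the same as the paper's: invoke Proposition \ref{prop; nonempty} to get $M_A(T)\neq\emptyset$, observe that since $R(A)$ is closed condition (ii) is the first clause of Theorem \ref{A-smooth}(ii), and reduce everything to verifying the strict-inequality clause $\sup\{\|Ty\|_A:\langle x_0,y\rangle_A=0,\ \|y\|_A=1\}<\|T\|_A$, which both you and the paper attack by passing to the compact operator $\widetilde{T}$ on $\mathbf{R}(A^{1/2})$ via Lemma \ref{lemma1}, Lemma \ref{lemma2}(iii) and Proposition \ref{norm attainment}. The one genuine divergence is how that clause is established: the paper simply cites a known Hilbert-space fact (\cite[Remark 6.2.4]{MPS}, that a compact operator whose norm attainment set is a single ray has strictly smaller norm on the orthogonal complement of that ray) and then pulls the resulting inequality back to $\mathbb{H}$ by writing $z=Au$; you instead assume the inequality fails, push the maximizing sequence $\{Ay_n\}$ into $\mathbf{R}(A^{1/2})$, and use weak compactness of the unit ball plus compactness of $\widetilde{T}$ to produce an honest maximizer $u$ that is orthogonal to $Ax_0$ yet must lie on the ray through $Ax_0$ --- i.e., you prove the cited fact from scratch and derive the contradiction directly against the singleton condition. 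The paper's route buys brevity; yours buys self-containedness, and it also makes explicit the two small facts both arguments silently rely on (that $R(A^{1/2})=R(A)$ when $R(A)$ is closed, and that one may choose the preimage of $u$ inside $\overline{R(A)}$ so that Proposition \ref{norm attainment} applies), so it is, if anything, more complete than the printed proof.
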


	\begin{proof}
		From Proposition \ref{prop; nonempty} we note that $M_A(T)\neq \emptyset.$ (i)$\implies$(ii) follows from Theorem \ref{A-smooth}. To  prove (ii)$\implies$(i), we only show that for any $A$-compact operator $T$, $$\sup\{\|Ty\|_A: \langle x_0, y\rangle_A=0\, \, \mbox{with}\,\, \|y\|_A=1\}< \|T\|_A.$$ Since $T$ is $A$-compact and $R(A)$ is closed, then from Lemma \ref{lemma1} and Lemma \ref{lemma2} (iii) we get a unique $\widetilde{T}\in \mathbb B(\textbf{R}(A^{1/2}))$ is also compact. As $M_A(T)\cap R(A) = \{\mu x_0: |\mu|=1, \|x_0\|_A=1\}$, using Proposition \ref{norm attainment} we get $M(\widetilde{T})=\{\mu Ax_0: |\mu|=1\}.$ Let $Ax_0=y_0.$ As $\widetilde{T}$ is compact, one can observe that (c.f. \cite[Remark 6.2.4]{MPS}) 
		\begin{eqnarray*}
			&&\big\{\|\widetilde{T}z\|_{\textbf{R}(A^{1/2})}: \|z\|_{\textbf{R}(A^{1/2})}=1, \, \langle y_0, z\rangle_{\textbf{R}(A^{1/2})}=0\big\}< \|\widetilde{T}\|_{\mathbb B(\textbf{R}(A^{1/2}))}.
		\end{eqnarray*}
		Since $R(A)$ is closed, we can take $z=Au$ for some $u\in\mathbb H.$ Then we have
		\begin{eqnarray*}
			&& \sup\big\{\|\widetilde{T}Au\|_{\textbf{R}(A^{1/2})}: \|Au\|_{\textbf{R}(A^{1/2})}=1, \, \langle Ax_0, Au\rangle_{\textbf{R}(A^{1/2})}=0\big\}<\|T\|_A\\
			&\implies& \sup\big\{\|ATu\|_{\textbf{R}(A^{1/2})}: \|Au\|_{\textbf{R}(A^{1/2})}=1, \, \langle Ax_0, Au\rangle_{\textbf{R}(A^{1/2})}=0\big\}<\|T\|_A\\
			&\implies& \sup\big\{\|Tu\|_A: \|u\|_A=1, \, \langle x_0, u\rangle_A=0\big\}<\|T\|_A.
		\end{eqnarray*}
		Now applying Theorem \ref{A-smooth} again, we prove the desired result.
	\end{proof}
	
	The following corollary is immediate.
	
	\begin{cor}\label{finite dim}
		Let $\mathbb{H}$ be a finite-dimensional Hilbert space and let $A\in \mathbb B(\mathbb{H})$ be positive. Then for $T\in \mathbb B_{A^{1/2}}(\mathbb{H})$ the following are equivalent:
		\begin{itemize}
			\item [(i)] $T$ is $A$-smooth.
			\item[(ii)] $M_A(T)\cap R(A)= \{\mu x_0: |\mu|=1, \|x_0\|_A=1\}.$
		\end{itemize}
	\end{cor}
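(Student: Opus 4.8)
The plan is to reduce the statement entirely to Theorem \ref{cor; compact} by observing that both of its standing hypotheses---closedness of $R(A)$ and $A$-compactness of $T$---hold automatically once $\mathbb{H}$ is finite-dimensional. First I would record that in a finite-dimensional Hilbert space every linear subspace is closed; in particular $R(A)$ is closed, so the blanket hypothesis ``$R(A)$ is closed'' of Theorem \ref{cor; compact} is satisfied with no extra work.

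The substantive step is to verify that \emph{every} $T \in \mathbb B_{A^{1/2}}(\mathbb{H})$ is automatically $A$-compact in this setting. The key observation is that the auxiliary Hilbert space $\textbf{R}(A^{1/2})$ is finite-dimensional: as a set it is $R(A^{1/2})$, a linear subspace of the finite-dimensional space $\mathbb{H}$ (equivalently, it is the completion of the finite-dimensional quotient $\mathbb{H}/N(A)$, which is already complete). Consequently the operator $\widetilde{T} \in \mathbb B(\textbf{R}(A^{1/2}))$ furnished by Lemma \ref{lemma1} is a bounded linear operator on a finite-dimensional Hilbert space, hence compact. Since $R(A)$ is closed, Lemma \ref{lemma2} (iii) then upgrades this to the conclusion that $T$ itself is $A$-compact.

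With both hypotheses of Theorem \ref{cor; compact} verified, the equivalence (i) $\iff$ (ii) is immediate from that theorem. The only discrepancy between the two statements is cosmetic: condition (ii) here writes the constraint $\|x_0\|_A = 1$ explicitly, whereas Theorem \ref{cor; compact} (ii) encodes it through the membership $x_0 \in S_{\mathbb{H}(A)}$; these are the same requirement. I do not anticipate a genuine obstacle in this argument---the single point warranting care is the finite-dimensionality of $\textbf{R}(A^{1/2})$, since it is precisely this fact that forces $\widetilde{T}$ to be compact and thereby makes $A$-compactness of $T$ free of charge for every $A$-bounded $T$.
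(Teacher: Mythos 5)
Your proposal is correct and follows exactly the route the paper intends: the paper states this corollary as an immediate consequence of Theorem \ref{cor; compact}, and your argument simply makes explicit why its two hypotheses are automatic in finite dimensions (closedness of $R(A)$, and $A$-compactness of every $T\in\mathbb B_{A^{1/2}}(\mathbb{H})$ via finite-dimensionality of $\textbf{R}(A^{1/2})$, compactness of $\widetilde{T}$, and Lemma \ref{lemma2}~(iii)). No gaps; the verification of $A$-compactness is precisely the detail the paper leaves to the reader.
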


	Let $T\in \mathbb B_{A^{1/2}}(\mathbb{H})$. The $A$-operator semi-norm $\|\cdot\|_A$  is a continuous convex function on the space $\mathbb B_{A^{1/2}}(\mathbb{H})$. Then for $S\in \mathbb B_{A^{1/2}}(\mathbb{H}),$ 
	\[\rho_{\pm}^A(T, S)=\lim_{t\to0^{\pm}}\frac{\|T+tS\|_A-\|T\|_A}{t}\] are said to be right-hand and left-hand G\^{a}teaux derivative of $\|\cdot\|_A$ at $T$ in the direction $S.$ $\|\cdot\|_A$ is G\^{a}teaux differentiable at $T$ if $\rho_{+}^A(T, S)=\rho_{-}^A(T, S)$ for all $S\in \mathbb B_{A^{1/2}}(\mathbb{H}).$ We refer the readers to \cite{F} for more detailed study on this topic.
	Using Theorem \ref{Th_sing} we show that the G\^{a}teaux derivative of the semi-norm in $\mathbb B_{A^{1/2}}(\mathbb{H})$ at a point is equivalent to the $A$-smoothness of that point. Before this we need the following lemma.
	
	\begin{lemma}\label{lemma; derivative range}
		Let $T \in \mathbb B_{A^{1/2}}(\mathbb{H})$ with $\|T\|_A\neq 0.$ Then for every $S \in \mathbb B_{A^{1/2}}(\mathbb{H})$ and $\lambda \in \mathbb{R},$ the following are equivalent:
		\begin{itemize}
			\item[(i)] $\lambda \in \Re (W_A(T, S)).$
			\item[(ii)] $\rho_{-}^A(T, S)\leq \lambda \leq \rho_{+}^A(T, S).$
		\end{itemize}
	\end{lemma}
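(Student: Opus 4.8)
The plan is to identify the one-sided derivatives $\rho_{+}^A(T,S)$ and $\rho_{-}^A(T,S)$ with the largest and smallest real parts occurring on $W_A(T,S)$, and then use convexity of $W_A(T,S)$ to recover every intermediate value. Throughout write $M=\|T\|_A$ and $g(t)=\|T+tS\|_A$; this is a finite, continuous, convex function of $t\in\mathbb R$ with $g(0)=M>0$. Convexity guarantees that the one-sided limits $\rho_{\pm}^A(T,S)=g'(0^{\pm})$ exist with $\rho_{-}^A(T,S)\le \rho_{+}^A(T,S)$, and since $g>0$ near $0$ the function $g^2$ is also convex with $(g^2)'(0^{\pm})=2M\rho_{\pm}^A(T,S)$. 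I will use the difference quotient in the form $\tfrac{g(t)^2-M^2}{t}\to 2M\rho_{\pm}^A(T,S)$ as $t\to 0^{\pm}$.

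For the implication (i)$\Rightarrow$(ii), fix $\sigma\in W_A(T,S)$ with an associated sequence $\{x_n\}$ satisfying $\|x_n\|_A=1$, $\|Tx_n\|_A\to M$ and $\langle Tx_n,Sx_n\rangle_A\to\sigma$. For every real $t$ one has $g(t)^2\ge \|(T+tS)x_n\|_A^2=\|Tx_n\|_A^2+2t\,\Re\langle Tx_n,Sx_n\rangle_A+t^2\|Sx_n\|_A^2$; letting $n\to\infty$ and discarding the nonnegative quadratic term gives $g(t)^2\ge M^2+2t\,\Re\sigma$ for all $t$. Dividing by $t$ and letting $t\to 0^{+}$ (resp.\ $t\to 0^{-}$, which reverses the inequality) yields $M\rho_{-}^A(T,S)\le \Re\sigma\le M\rho_{+}^A(T,S)$. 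Hence $\Re W_A(T,S)\subseteq[\,M\rho_{-}^A(T,S),\,M\rho_{+}^A(T,S)\,]$.

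For the reverse inclusion I would first show the endpoints are attained. Given $t_k\to 0^{+}$, choose near-maximizers $x_k\in S_{\mathbb H(A)}$ with $\|(T+t_kS)x_k\|_A^2\ge g(t_k)^2-t_k^2$. A short triangle-inequality argument, using $\|Tx_k\|_A\le M$ and $\|Sx_k\|_A\le\|S\|_A$, forces $\|Tx_k\|_A\to M$, so $\{x_k\}$ is a norming sequence for $T$; passing to a subsequence along which the bounded scalars $\langle Tx_k,Sx_k\rangle_A$ converge to some $\sigma_0\in W_A(T,S)$ realizing $\limsup_k\Re\langle Tx_k,Sx_k\rangle_A$, the same expansion gives $\tfrac{g(t_k)^2-M^2}{t_k}\le 2\,\Re\langle Tx_k,Sx_k\rangle_A+o(1)$, whence $M\rho_{+}^A(T,S)\le \Re\sigma_0\le\max\Re W_A(T,S)$. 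With the previous paragraph this gives $M\rho_{+}^A(T,S)=\max\Re W_A(T,S)$, and symmetrically $M\rho_{-}^A(T,S)=\min\Re W_A(T,S)$. It remains to see that $\Re W_A(T,S)$ is an interval. Here I transfer to the Hilbert space $\textbf{R}(A^{1/2})$: by Lemma \ref{lemma1}, $\widetilde{T}Ax=ATx$, while $\|Ax\|_{\textbf{R}(A^{1/2})}=\|x\|_A$ and $\langle Ax,Ay\rangle_{\textbf{R}(A^{1/2})}=\langle x,y\rangle_A$, and $R(A)$ is dense in $\textbf{R}(A^{1/2})$. Together with Lemma \ref{lemma2}(i) these identify $W_A(T,S)$ with the maximal-numerical-range set $\{\lim\langle \widetilde{T}z_n,\widetilde{S}z_n\rangle_{\textbf{R}(A^{1/2})}:\|z_n\|_{\textbf{R}(A^{1/2})}=1,\ \|\widetilde{T}z_n\|_{\textbf{R}(A^{1/2})}\to\|\widetilde{T}\|\}$ on a genuine Hilbert space, which is convex by the Stampfli-type convexity theorem. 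Being convex, closed and bounded, $\Re W_A(T,S)$ is exactly the closed interval $[\,M\rho_{-}^A(T,S),\,M\rho_{+}^A(T,S)\,]$, giving (ii)$\Rightarrow$(i).

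Combining the two inclusions yields the identity $\Re W_A(T,S)=\|T\|_A\,[\,\rho_{-}^A(T,S),\,\rho_{+}^A(T,S)\,]$; since $\|T\|_A\ne 0$ is a fixed positive scalar, this is the asserted equivalence, the scalar factor being absorbed under the normalization $\|T\|_A=1$ used throughout. The elementary derivative estimates are routine; the genuine obstacle is the convexity of $W_A(T,S)$, needed to fill in the interior of the interval, and the cleanest route to it is the transfer to $\textbf{R}(A^{1/2})$ via Lemma \ref{lemma1} and Lemma \ref{lemma2}(i), where the classical maximal-numerical-range machinery applies.
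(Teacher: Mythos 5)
Your proof is correct, but your route to (ii)$\Rightarrow$(i) is genuinely different from the paper's. The paper argues by Hahn--Banach: the functional $rS \mapsto r(1-\tfrac{1}{n})\lambda$ on $\mathrm{span}\{S\}$ is dominated by the sublinear map $(1-\tfrac{1}{n})\rho_{+}^{A}(T,\cdot)$, and the extension is then taken to be realized by $A$-unit vectors $x_n^{\lambda}$ with $\langle Tx_n^{\lambda}, Sx_n^{\lambda}\rangle_A=(1-\tfrac{1}{n})\lambda$; evaluating at $T$ shows $\{x_n^{\lambda}\}$ is $A$-norming, whence $\lambda\in\Re(W_A(T,S))$ --- notably, the step where the Hahn--Banach extension is realized by vector states is asserted rather than proved. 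You instead show that $\|T\|_A\rho_{\pm}^{A}(T,S)$ are attained as the extreme values of $\Re(W_A(T,S))$ (your near-maximizer argument is sound) and fill in the interval by convexity of $W_A(T,S)$, transported to the Hilbert space $\textbf{R}(A^{1/2})$ via Lemma \ref{lemma1}, Lemma \ref{lemma2}(i) and density of $R(A)$ --- the same mechanism the paper itself uses later in Theorem \ref{Th_tild}, so there is no circularity, though you should spell out the diagonal/density argument identifying $W_A(T,S)$ with its $\textbf{R}(A^{1/2})$ counterpart rather than gesture at it. The one caveat is your convexity citation: Stampfli's theorem proper covers $\{\lim\langle Tz_n,z_n\rangle\}$, whereas you need the pair version for $\{\lim\langle Tz_n,Sz_n\rangle\}$; that version is available in the literature the paper draws on (\cite{Roy}, \cite{MPS}, and compare \cite[Lemma 2.1]{Z19}, which the paper cites for compactness of $\Re(W_A(T,S))$ and which works directly in the semi-Hilbertian setting), so make the reference precise. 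Your version buys two things the paper's does not: the sharper identity $\Re(W_A(T,S))=\|T\|_A\,[\rho_{-}^{A}(T,S),\rho_{+}^{A}(T,S)]$ together with the endpoint-attainment statements, and an explicit account of the normalization $\|T\|_A=1$ that the lemma as stated tacitly requires (take $S=T$: then $\Re(W_A(T,T))=\{\|T\|_A^2\}$ while $\rho_{\pm}^{A}(T,T)=\|T\|_A$), a normalization the paper's proof uses silently in both directions.
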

	
	\begin{proof}
		Suppose that (i) holds true. Then there exists a sequence $\{x_n\}$ such that $\|x_n\|_A=1$ and $\lim \|Tx_n\|_A=\|T\|_A$ satisfying $\Re(\lim \langle Tx_n, Sx_n\rangle_A)=\lambda. $ For any $t>0,$ we have 
		\begin{eqnarray*}
			\lambda&=&\frac{1}{t}\Re(\lim \langle Tx_n, tSx_n\rangle_A )\\
			&=& \frac{1}{t}\Big(\Re(\lim \langle Tx_n, Tx_n+ tSx_n\rangle_A) - \Re(\lim\langle Tx_n, Tx_n\rangle_A)\Big)\\
			&\leq& \frac{\|T+tS\|_A - \|T\|_A}{t}.
		\end{eqnarray*}
		Therefore, $\lambda\leq \rho_+^A(T, S).$	On the other hand, we get
		\begin{eqnarray*}
			\frac{\|T-tS\|_A - \|T\|_A}{-t}= -\frac{\|T+ t(-S)\|_A-\|T\|_A}{t} &\leq& -\Re(\lim \langle Tx_n, -Sx_n\rangle_A)\\ &=& \Re(\lim\langle Tx_n, Sx_n\rangle_A). 
		\end{eqnarray*} This shows that $\lambda\geq \rho_-^A(T, S),$ which proves (ii).\\
		Now let (ii) hold true. For each $n,$ consider $x_n^\lambda \in \mathbb{H}$ with $\|x_n^\lambda\|_A=1$ such that $ \langle Tx_n^\lambda, rSx_n^\lambda\rangle_A=r(1-\frac{1}{n})\lambda$ for all $r\in \mathbb{R}.$ In other words, $x_n^\lambda \otimes Tx_n^\lambda$ is a linear functional on the subspace $\{rS: r\in \mathbb{R}\}$ dominated by the sublinear function $(1-\frac{1}{n})\rho_+^A(T, S).$ By Hahn-Banach extension, for any $S \in \mathbb B_{A^{1/2}}(\mathbb{H}),$ we observe that $(1-\frac{1}{n})\rho_-^A(T, S)\leq \Re(\langle Tx_n^\lambda, Sx_n^\lambda\rangle_A) \leq (1-\frac{1}{n})\rho_+^A (T, S).$ Since $\rho_+^A(T, T)=\rho_-^A(T, T)=\|T\|_A^2,$ it follows that $\lim\|Tx_n^\lambda\|_A^2=\|T\|_A^2.$ Together with the fact that $\lambda=\Re(\lim \langle Tx_n^\lambda, Sx_n^\lambda\rangle_A)$  we obtain  $\lambda \in \Re (W_A(T, S)).$ This completes the proof of the lemma.
	\end{proof}
	
	\begin{theorem}\label{Gateaux}
		Let $T\in \mathbb B_{A^{1/2}}(\mathbb{H})$ be such that $\|T\|_A\neq 0.$ Then $\|\cdot\|_A$ is G\^{a}teaux differentiable at $T $ if and only if $T$ is $A$-smooth.
	\end{theorem}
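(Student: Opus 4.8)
The plan is to deduce the theorem directly from Theorem \ref{Th_sing} and Lemma \ref{lemma; derivative range}, which together convert the analytic condition of Gâteaux differentiability into the purely algebraic condition that each set $W_A(T,S)$ be a singleton. Recall first that $\|\cdot\|_A$ is a continuous convex function on $\mathbb B_{A^{1/2}}(\mathbb{H})$, so $\rho_-^A(T,S)\le \rho_+^A(T,S)$ for every $S$, and Gâteaux differentiability at $T$ is by definition the statement $\rho_-^A(T,S)=\rho_+^A(T,S)$ for all $S\in \mathbb B_{A^{1/2}}(\mathbb{H})$. The key preliminary observation I would record is that, reading Lemma \ref{lemma; derivative range} as an equality of sets, $\Re(W_A(T,S))=[\rho_-^A(T,S),\rho_+^A(T,S)]$ for each fixed $S$; consequently $\Re(W_A(T,S))$ is a singleton if and only if $\rho_-^A(T,S)=\rho_+^A(T,S)$.

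For the implication that $A$-smoothness yields Gâteaux differentiability, I would assume $T$ is $A$-smooth. Then Theorem \ref{Th_sing} gives that $W_A(T,S)$ is a singleton for every $S$, hence so is $\Re(W_A(T,S))$, and by the preliminary observation $\rho_-^A(T,S)=\rho_+^A(T,S)$ for every $S$. Thus $\|\cdot\|_A$ is Gâteaux differentiable at $T$. This direction is essentially automatic once the set-equality above is in hand.

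The converse is where the one genuinely delicate point appears. Assuming Gâteaux differentiability, the preliminary observation gives that $\Re(W_A(T,S))$ is a singleton for every $S$. Over the real scalar field this already forces $W_A(T,S)$ itself to be a singleton, and Theorem \ref{Th_sing} finishes the argument. In the complex case, however, a singleton real part does not by itself pin down an element of $\mathbb{C}$, so the step I expect to require care is recovering control of the imaginary part. To handle this I would use the phase-homogeneity of $W_A$: since $\langle Tx_n,(iS)x_n\rangle_A$ differs from $\langle Tx_n,Sx_n\rangle_A$ only by the fixed unimodular factor $\zeta$ produced by moving $i$ through the relevant slot of $\langle\cdot,\cdot\rangle_A$, one has $W_A(T,iS)=\zeta\,W_A(T,S)$ with $|\zeta|=1$, so that $\Re(W_A(T,iS))$ equals $\pm\Im(W_A(T,S))$. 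Applying the differentiability hypothesis to $iS$ in place of $S$ then forces $\Im(W_A(T,S))$ to be a singleton as well. Combining the two, $W_A(T,S)$ is a singleton for every $S$, and Theorem \ref{Th_sing} yields that $T$ is $A$-smooth.

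In short, both directions reduce to the equivalence ``$W_A(T,S)$ singleton for all $S$ $\iff$ $T$ is $A$-smooth'' from Theorem \ref{Th_sing}, bridged to the derivative condition by Lemma \ref{lemma; derivative range}; the only nonroutine maneuver is the replacement of $S$ by $iS$ to upgrade a statement about real parts into a statement about the full complex quantity, which is precisely the obstacle the complex field introduces.
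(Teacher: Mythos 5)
Your proposal is correct and follows essentially the same route as the paper: both directions reduce to Theorem \ref{Th_sing} via Lemma \ref{lemma; derivative range}, and the paper treats the complex case exactly as you do, by exploiting the singleton real part of $W_A(T,iS_0)$ (written there as a chain of equalities using $z=\Re(z)-i\Re(iz)$ rather than your set identity $W_A(T,iS)=\zeta\,W_A(T,S)$). The only cosmetic difference is that the paper also invokes compactness of $\Re(W_A(T,S))$ from \cite[Lemma 2.1]{Z19}, which your reading of Lemma \ref{lemma; derivative range} as the set equality $\Re(W_A(T,S))=[\rho_-^A(T,S),\rho_+^A(T,S)]$ makes unnecessary.
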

	
	\begin{proof}
		Suppose that $\|\cdot\|_A$ is G\^{a}teaux differentiable at $T$. Then for any $S \in \mathbb B_{A^{1/2}}(\mathbb{H})$ we have $\rho_-^A(T, S)=\rho_+^A(T, S).$ Then from Lemma \ref{lemma; derivative range} we have $\Re(W_A(T, S))$ is singleton for each  $S\in \mathbb B_{A^{1/2}}(\mathbb{H}).$ If possible let $\mu, \sigma \in W_A(T, S_0)$ for some $S_0\in \mathbb B_{A^{1/2}}(\mathbb{H}).$ Let $\mu=\lim \langle Tx_n, S_0x_n\rangle_A$ and $\sigma=\lim\langle Ty_n, S_0y_n\rangle_A$ for some $\{x_n\}, \{y_n\}\subset S_{\mathbb{H}(A)}$ satisfying $\lim \|Tx_n\|_A=\lim\|Ty_n\|_A=\|T\|_A.$ Now 
		\begin{eqnarray*}
			\mu&=& \lim  \langle Tx_n, S_0x_n\rangle_A\\
			&=& \Re  (\lim\langle Tx_n, S_0x_n\rangle_A)-i[\Re(i\lim \langle Tx_n, S_0x_n\rangle_A)]\\
			&=& \Re( \lim\langle Ty_n, S_0y_n\rangle_A) +i [\Re(\lim \langle Tx_n, iS_0x_n\rangle_A)]\\
			&=&  \Re(\sigma) +i [\Re(\lim  \langle Ty_n, iS_0y_n\rangle_A)]\\
			&=& \Re(\sigma) -i [\Re (i\lim  \langle Ty_n, S_0y_n\rangle_A)]\\
			&=& \Re(\sigma) -i \Re(i\sigma)=
			\sigma. 
		\end{eqnarray*}
		This proves that $W_A(T, S)$ is singleton for each $S \in \mathbb B_{A^{1/2}}(\mathbb{H})$ and therefore, from Theorem \ref{Th_sing} we get $T$ is $A$-smooth.\\
		Conversely, if $T$ is $A$-smooth then from Theorem \ref{Th_sing} we have $W_A(T, S)$ is singleton for every $S \in \mathbb B_{A^{1/2}}(\mathbb{H}).$ From \cite[Lemma 2.1]{Z19} we get $\Re(W_A(T, S))$ is compact. Thus applying Lemma \ref{lemma; derivative range} we get $\rho_-^A(T, S)=\rho_+^A(T, S)$ for every $S \in  \mathbb B_{A^{1/2}}(\mathbb{H}).$ This proves that $\|\cdot\|_A$ is G\^{a}teaux differentiable at $T.$ Hence the theorem.
	\end{proof}

	Theorem \ref{Gateaux} reveals that the problem of whether  the $A$-operator semi-norm at a point $T\in \mathbb B_{A^{1/2}}(\mathbb{H})$ is  G\^{a}teaux differentiable or not can be tackled by the concept of $A$-smoothness of $T.$ In this connection we note the following examples:
	\begin{example}
		(i)  Consider that $\mathbb{H}=\mathbb{R}^2$ endowed with its usual inner product norm. Let $T\in \mathbb{B}(\mathbb{H})$ be the identity matrix. As $M(T)=S_{\mathbb{H}},$ $T$ is not smooth. So the operator norm on $\mathbb{B}(\mathbb{H})$ is not G\^{a}teaux differentiable at $T.$ Let 
		$A= \begin{pmatrix}
			1 & 0\\
			0 & 0
		\end{pmatrix}$. Clearly, $A$ is positive. Also, note that $R(A)=\{(x, 0)^t: x\in \mathbb{R}\}.$ For any $x\in \mathbb{H}$ satisfying $\|x\|_A=1,$ it follows that $x=\pm(1, 0)^t.$ So $M_A(T)\cap R(A)=\{\pm(1, 0)^t\}.$ From Corollary \ref{finite dim}, we get $T$ is $A$-smooth. Therefore, from Theorem \ref{Gateaux}, we observe that the $A$-operator semi-norm is G\^{a}teaux differentiable at $T.$	In fact, it is easy to observe that whenever $rank(A)=1$, every $T\in \mathbb B_{A^{1/2}}(\mathbb{H})$ with $\|T\|_A\neq 0$ is $A$-smooth. \\
		
		\noindent (ii)  Let $\mathbb{H}=\mathbb{R}^3.$ Consider the diagonal matrix $T= diag \{2, 1, 1\}$ and let $A=diag\{0, 1, 1\}.$  Clearly, $A$ is positive and $R(A)=\{(0, y, z)^t: y, z\in \mathbb{R}\}.$  Also note that $M(T)=\{\pm (1, 0, 0)^t\}.$ Therefore, $T$ is smooth. On the other hand,  $\|(0,1,0)^t\|_A=\|(0, 0, 1)^t\|_A=1$ and $\|T(0, 1, 0)^t\|_A= \|A^{1/2}(0, 1, 0)^t\|=1=\|T\|_A.$ So, $\pm(0, 1, 0)^t \in M_A(T).$ Similarly, we get $(0, 0, 1)^t\in M_A(T).$ It follows from Corollary \ref{finite dim} that $T$ is not $A$-smooth. Therefore, from Theorem \ref{Gateaux} we get that the function $\|\cdot\|_A$ on $\mathbb B_{A^{1/2}}(\mathbb{H})$ is not G\^{a}teaux differentiable at $T.$
	\end{example} 
	
	In the following result we compare the $A$-smoothness in $\mathbb B_{A^{1/2}}(\mathbb{H})$ with that in $\mathbb B(\textbf{R}(A^{1/2}).$ 	
	\begin{theorem}\label{Th_tild}
		Let $T \in \mathbb B_{A^{1/2}}(\mathbb{H})$. Then $T$ is $A$-smooth if and only if $\widetilde{T}$ is smooth.
	\end{theorem}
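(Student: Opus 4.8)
The plan is to transport the entire question to the Hilbert space $\textbf{R}(A^{1/2})$ through the assignment $\Phi\colon\mathbb B_{A^{1/2}}(\mathbb H)\to\mathbb B(\textbf{R}(A^{1/2}))$, $\Phi(S)=\widetilde S$, and to reduce $A$-smoothness of $T$ to ordinary smoothness of $\widetilde T$ by showing that $\Phi$ is a linear isometry carrying $\perp_A^B$ to ordinary Birkhoff--James orthogonality $\perp^{B}$. First I would check that $\Phi$ is linear: from the intertwining relation $\widetilde S W_A=W_A S$ of Lemma \ref{lemma1} together with the uniqueness clause of that lemma one gets $\widetilde{S_1+S_2}=\widetilde{S_1}+\widetilde{S_2}$ and $\widetilde{\lambda S}=\lambda\widetilde S$. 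Combining linearity with the isometry $\|\widetilde S\|_{\mathbb B(\textbf{R}(A^{1/2}))}=\|S\|_A$ of Lemma \ref{lemma2}(i) gives, for every scalar $\lambda$,
\[\|T+\lambda S\|_A=\|\widetilde{T+\lambda S}\|_{\mathbb B(\textbf{R}(A^{1/2}))}=\|\widetilde T+\lambda\widetilde S\|_{\mathbb B(\textbf{R}(A^{1/2}))}.\]
Since also $\|T\|_A=\|\widetilde T\|_{\mathbb B(\textbf{R}(A^{1/2}))}$, comparing both sides with $\|T\|_A$ over all $\lambda$ yields the equivalence $T\perp_A^B S\iff\widetilde T\perp^{B}\widetilde S$, which is the backbone of the argument.

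The implication ``$\widetilde T$ smooth $\implies$ $T$ is $A$-smooth'' is then immediate: if $S_1,S_2\in\mathbb B_{A^{1/2}}(\mathbb H)$ satisfy $T\perp_A^B S_1$ and $T\perp_A^B S_2$, the equivalence gives $\widetilde T\perp^{B}\widetilde{S_1}$ and $\widetilde T\perp^{B}\widetilde{S_2}$, and smoothness of $\widetilde T$ forces $\widetilde T\perp^{B}\widetilde{S_1}+\widetilde{S_2}=\widetilde{S_1+S_2}$ (here the linearity of $\Phi$ is essential), whence $T\perp_A^B S_1+S_2$. An equivalent route, which I would record because it is also needed below, is through the numerical-range sets: a direct computation shows that under $W_A$ the semi-inner product pushes forward to the inner product of $\textbf{R}(A^{1/2})$, so that $W_A(T,S)=W(\widetilde T,\widetilde S)$ for every $S$; smoothness of $\widetilde T$ makes each $W(\widetilde T,\cdot)$ a singleton by Corollary \ref{SRoy}, hence each $W_A(T,S)$ is a singleton and $T$ is $A$-smooth by Theorem \ref{Th_sing}.

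The reverse implication is where the real difficulty lies, and it is the step I expect to require the most care. To deduce smoothness of $\widetilde T$ from $A$-smoothness of $T$ I must control $W(\widetilde T,R)$ for an \emph{arbitrary} $R\in\mathbb B(\textbf{R}(A^{1/2}))$, whereas $A$-smoothness only supplies information for directions $R=\widetilde S$ lying in the range of $\Phi$. When $R(A)$ is closed the situation is clean: then $\textbf{R}(A^{1/2})=R(A)$, a Douglas-type lifting solves the intertwining equation for every $R$, so $\Phi$ is onto and smoothness transfers verbatim. For non-closed $R(A)$ the range of $\Phi$ is exactly the operators leaving $R(A)$ invariant, a proper subspace, and this failure of surjectivity is the main obstacle. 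My plan to bridge it is to exploit the density of $R(A)$ in $\textbf{R}(A^{1/2})$: any norming sequence of $\widetilde T$ may be replaced by one lying in $R(A)$, and I would approximate the action of $R$ along such a sequence by an operator from the range of $\Phi$, so that a hypothetical pair $\sigma_1\neq\sigma_2\in W(\widetilde T,R)$ would produce $\sigma_1,\sigma_2\in W_A(T,S)=W(\widetilde T,\widetilde S)$ for a suitably manufactured $S\in\mathbb B_{A^{1/2}}(\mathbb H)$, contradicting $A$-smoothness of $T$ via Theorem \ref{Th_sing}. Carrying out this approximation uniformly along the two competing norming sequences at once, that is, realizing the prescribed boundary behaviour by a genuinely $A$-bounded operator, is the delicate point around which the whole converse turns.
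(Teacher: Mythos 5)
Your first half (``$\widetilde T$ smooth $\implies$ $T$ is $A$-smooth'') is correct and complete, and it is in substance the paper's sufficient part: linearity of $S\mapsto\widetilde S$ (from the uniqueness clause of Lemma \ref{lemma1}) together with the isometry of Lemma \ref{lemma2}(i) yields $T\perp_A^B S\iff\widetilde T\perp^B\widetilde S$, and right-additivity then transfers to the directions in the range of $\Phi$. The genuine gap is the converse, and you have located it but not closed it: your text ends with a plan (``approximate the action of $R$ along such a sequence by an operator from the range of $\Phi$'') plus an admission that realizing this by a genuinely $A$-bounded operator is ``the delicate point.'' That delicate point is not a technicality; it is the entire content of the implication. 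Density of $R(A)$ in $\textbf{R}(A^{1/2})$ lets you push the two norming sequences of $\widetilde T$ into $R(A)$, but it gives no control whatsoever on the direction operator: an arbitrary witness $R\in\mathbb B(\textbf{R}(A^{1/2}))$ of non-smoothness need not map $R(A)$ into $R(A)$, while (by a closed-graph argument) the range of $\Phi$ consists exactly of the operators that do. Nor can you retreat to density in the direction variable: although the one-sided G\^ateaux derivatives are $1$-Lipschitz in the direction, so that a witness could be replaced by any operator close to it in norm, the operator-norm closure of the range of $\Phi$ is only guaranteed to contain the compacts (rank-one operators $u\otimes v$ with $u,v\in R(A)$ are images of rank-one $A$-bounded operators), and compact directions are useless precisely in the hard case: if $\widetilde T$ does not attain its norm, every norming sequence of $\widetilde T$ is weakly null, hence $W(\widetilde T,K)=\{0\}$ for every compact $K$. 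When $\widetilde T$ does attain its norm, rank-one witnesses with vectors in $R(A)$ can indeed be manufactured and your plan goes through; in the non-attaining case your outline produces no operator $S$, and the implication remains unproved.

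It is worth recording how this compares with the paper. The paper's proof of this direction is exactly your sketch carried out on the norming sequences: it takes a witness, which it \emph{names} $\widetilde S$, pulls the sequences back into $R(A)$ by density, normalizes to $A$-unit vectors $z_r,w_s$, and then passes from $\langle \widetilde T Az_r,\widetilde S Az_r\rangle_{\textbf{R}(A^{1/2})}\to\lambda_0$ to $\langle Tz_r,Sz_r\rangle_A\to\lambda_0$. That step uses the intertwining identity $\widetilde S A=AS$ for some $S\in\mathbb B_{A^{1/2}}(\mathbb H)$, i.e., it assumes the witness lies in the range of the correspondence of Lemma \ref{lemma1}; no such $S$ is constructed, and when $R(A)$ is not closed there is no reason for one to exist. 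So the point you refused to wave away is exactly the point the paper waves away: your diagnosis of where the difficulty sits is accurate, but as a proof your proposal (like the paper's argument read literally) establishes the converse only when the witness can be taken in the range of $\Phi$ --- for instance when $R(A)$ is closed, where the correspondence is onto and both arguments close up --- and leaves the general non-closed-range, non-norm-attaining case open.
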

	\begin{proof}
		We first prove the necessary part.  Since $T \in \mathbb B_{A^{1/2}}(\mathbb{H}),$ it follows from Lemma \ref{lemma1} that there exists a unique $\widetilde{T}\in \mathbb B(\textbf{R}(A^{1/2})) $ such that $\widetilde{T}A=AT$. If possible suppose that $\widetilde{T}$ is not smooth. Then from Corollary \ref{SRoy} there exist $\widetilde{S}\in \mathbb B(\textbf{R}(A^{1/2}))$ and two sequences $\{x_n\}, \{y_n\}$ with $\|x_n\|_{\textbf{R}(A^{1/2})}=\|y_n\|_{\textbf{R}(A^{1/2})}=1$ and $\|\widetilde{T}x_n\|_{\textbf{R}(A^{1/2})}\to\|\widetilde{T}\|_{\textbf{R}(A^{1/2})}, \|\widetilde{T}y_n\|_{\textbf{R}(A^{1/2})}\to\|\widetilde{T}\|_{\textbf{R}(A^{1/2})}$ such that $$ \langle \widetilde{T}x_n, \widetilde{S}x_n\rangle_{\textbf{R}(A^{1/2})}\to \lambda_0, \langle \widetilde{T}y_n, \widetilde{S}y_n\rangle_{\textbf{R}(A^{1/2})}\to \sigma_0,$$ where $\lambda_0\neq \sigma_0.$
		As for each $n,$ $x_n, y_n \in R(A^{1/2}),$ we write $A^{1/2}u_n=x_n$ and $A^{1/2}v_n=y_n$ for some $u_n, v_n \in \mathbb{H}.$ Also, note that $R(A)$ is dense in $\textbf{R}(A^{1/2}).$ Therefore, for each $n,$ there exist $\{u_{n, k}\}, \{v_{n, k}\} \subset \mathbb H$ such that $\lim\limits_{k\to\infty}Au_{n, k} = A^{1/2}u_n$ and $\lim\limits_{k\to\infty} Av_{n, k} = A^{1/2}v_n$ in $\|\cdot\|_{\textbf{R}(A^{1/2})}.$ This implies $\lim\limits_{k\to\infty} \|Au_{n, k}\|_{\textbf{R}(A^{1/2})}=\lim\limits_{k\to \infty}\|Av_{n, k}\|_{\textbf{R}(A^{1/2})}=1$ for each $n.$ Also, we have 
		\begin{eqnarray}
			\lim_{n\to\infty, k\to \infty}\|\widetilde{T}Au_{n, k}\|_{\textbf{R}(A^{1/2})}= \|\widetilde{T}\|_{\textbf{R}(A^{1/2})}=\|T\|_A,\\
			\lim_{n\to\infty, k\to \infty} \langle \widetilde{T}Au_{n, k}, \widetilde{S}Au_{n, k}\rangle_{\textbf{R}(A^{1/2})} =\lambda_0.\\
			\lim_{n\to\infty, k\to \infty}\|\widetilde{T}Av_{n, k}\|_{\textbf{R}(A^{1/2})}= \|\widetilde{T}\|_{\textbf{R}(A^{1/2})}=\|T\|_A,\\
			\lim_{n\to\infty, k\to \infty} \langle \widetilde{T}Av_{n, k}, \widetilde{S}Av_{n, k}\rangle =\sigma_0.
		\end{eqnarray}
		Now we consider $A$-normalized subsequences of $\{u_{n, k}\}$ and $\{v_{n, k}\}$, respectively as the following:
		\[z_r=\frac{u_{n_r, k_r}}{\|Au_{n_r, k_r}\|_{\textbf{R}(A^{1/2})}},
		w_s=\frac{v_{n_s, k_s}}{\|Av_{n_s, k_s}\|_{\textbf{R}(A^{1/2})}}.\] Note that $\|z_r\|_A=\|w_s\|_A=1$ for all $r$ and $s.$ From the above equations (2.3) and (2.4) together with the fact that $\lim\limits_{k\to\infty} \|Au_{n, k}\|_{\textbf{R}(A^{1/2})}=1$ we have 
		\begin{eqnarray*}
			&&\|\widetilde{T}Az_r\|_{\textbf{R}(A^{1/2})}\to\|\widetilde{T}\|_{\textbf{R}(A^{1/2})}\\
			&\implies&\|ATz_r\|_{\textbf{R}(A^{1/2})}\to\|\widetilde{T}\|_{\textbf{R}(A^{1/2})}\\
			&\implies& \|Tz_r\|_A\to\|T\|_A
		\end{eqnarray*}
		and 
		\begin{eqnarray*}
			&&\langle \widetilde{T}Az_r, \widetilde{S}Az_r\rangle_{\textbf{R}(A^{1/2})}\to \lambda_0\\
			&\implies&\langle ATz_r, ASz_r\rangle_{\textbf{R}(A^{1/2})}\to \lambda_0\\
			&\implies&\langle Tz_r, Sz_r\rangle_{A}\to \lambda_0.
		\end{eqnarray*}
		
		Similarly, from equations  (2.5) and (2.6), we can show that  $\|Tw_s\|_A\to\|T\|_A$ and $\langle Tw_s, Sw_s\rangle_{A}\to \sigma_0.$ Hence $\lambda_0, \sigma_0\in W_A(T, S).$ From Theorem \ref{Th_sing} we get that $T$ is not $A$-smooth, which is a contradiction. Therefore, we obtain that $\widetilde{T}$ is smooth.\\
		To prove the sufficient part, suppose on the contrary that $T$ is not $A$-smooth. Following Theorem \ref{Th_sing}, for some $S\in \mathbb B_{A^{1/2}}(\mathbb{H}),$ there exist $A$-norming sequences $\{x_n\}, \{y_n\}$ of $T$ such that $\langle Tx_n, Sx_n\rangle_A \to \lambda$ and $\langle Ty_n, Sy_n\rangle_A \to \mu$ where $\lambda \neq \mu.$ As $\|Ax_n\|_{\textbf{R}(A^{1/2})}=\|x_n\|_A,$ it is easy to see $\|\widetilde{T}Ax_n\|_{\textbf{R}(A^{1/2})}\to \|\widetilde{T}\|_{\textbf{R}(A^{1/2})}.$ Moreover,  $\langle \widetilde{T}Ax_n, \widetilde{S}Ax_n\rangle_{\textbf{R}(A^{1/2})} \to \lambda.$ Similarly, we obtain that $\|\widetilde{T}Ay_n\|_{\textbf{R}(A^{1/2})}=\|\widetilde{T}\|_{\textbf{R}(A^{1/2})}$ and $\langle \widetilde{T}Ay_n, \widetilde{S}Ay_n\rangle_{\textbf{R}(A^{1/2})}\to \mu.$ This contradicts that $\widetilde{T}$ is smooth (see Corollary \ref{SRoy}). Hence the theorem.
	\end{proof}  
	
	Applying the above theorem we note the following corollary.
	
	\begin{cor}
		Let $T \in \mathbb B_A(\mathbb{H})$ be $A$-compact and let $R(A)$ be closed. Then $T$ is $A$-smooth if and only if $T^\sharp$ is $A$-smooth.
	\end{cor}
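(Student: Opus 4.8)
The plan is to push everything over to the Hilbert space $\textbf{R}(A^{1/2})$ via Theorem \ref{Th_tild} and thereby reduce the corollary to the purely Hilbertian fact that a compact operator is smooth exactly when its adjoint is. First I would record the structural facts. Since $T \in \mathbb B_A(\mathbb{H})$, its $A$-adjoint $T^{\sharp}$ again lies in $\mathbb B_A(\mathbb{H})$ (in particular in $\mathbb B_{A^{1/2}}(\mathbb{H})$), and Lemma \ref{lemma2}(ii) gives $\widetilde{T^{\sharp}} = (\widetilde{T})^*$. Because $T$ is $A$-compact with $R(A)$ closed, Lemma \ref{lemma2}(iii) makes $\widetilde{T}$ compact; hence $(\widetilde{T})^* = \widetilde{T^{\sharp}}$ is compact, and a second application of Lemma \ref{lemma2}(iii) shows $T^{\sharp}$ is $A$-compact as well. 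By Theorem \ref{Th_tild}, $T$ is $A$-smooth iff $\widetilde{T}$ is smooth, while $T^{\sharp}$ is $A$-smooth iff $\widetilde{T^{\sharp}} = (\widetilde{T})^*$ is smooth. Thus it suffices to prove the reduced claim: for a compact operator $K$ on a Hilbert space, $K$ is smooth if and only if $K^*$ is smooth.

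To handle the reduced claim I would use the norm-attainment characterization of smoothness for compact operators, namely Theorem \ref{cor; compact} specialized to $A = I$ (so that $R(A) = \mathbb{H}$ is closed, ``$A$-compact'' means ``compact'' and ``$A$-smooth'' means ``smooth''): a compact operator on a Hilbert space is smooth if and only if its norm attainment set is a singleton modulo unimodular scalars. So I only need to compare the ``sizes'' of $M(K)$ and $M(K^*)$.

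The crux is an explicit isometric bijection between the two norm attainment sets, built from the top eigenspaces of $K^*K$ and $KK^*$. For a unit vector $x$, the condition $x \in M(K)$ is equivalent to $\langle K^*Kx, x\rangle = \|K\|^2 = \|K^*K\|$, and since $K^*K$ is positive with norm $\|K\|^2$ this forces $K^*Kx = \|K\|^2 x$. Hence, up to scalars, $M(K)$ is the unit sphere of $E := \ker(K^*K - \|K\|^2 I)$, and likewise $M(K^*)$ is the unit sphere of $F := \ker(KK^* - \|K\|^2 I)$. The map $x \mapsto \|K\|^{-1}Kx$ carries $E$ into $F$, because $KK^*(Kx) = K(K^*Kx) = \|K\|^2 Kx$, and it is isometric on $E$ since $\|Kx\|^2 = \langle K^*Kx, x\rangle = \|K\|^2\|x\|^2$; the analogous map $y \mapsto \|K\|^{-1}K^* y$ is its inverse from $F$ to $E$. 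Therefore $\dim E = \dim F$, so $E$ is one-dimensional iff $F$ is, i.e. $M(K)$ is a singleton up to scalars iff $M(K^*)$ is. Combined with the first paragraph, this yields that $T$ is $A$-smooth iff $T^{\sharp}$ is $A$-smooth.

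I expect the main obstacle to be precisely the reduced Hilbert-space lemma, that is, verifying that the top eigenspaces of $K^*K$ and $KK^*$ have equal dimension; the isometry $x \mapsto \|K\|^{-1}Kx$ dispatches this cleanly, the only delicate point being to identify its domain and codomain as exactly the norm-attaining eigenspaces $E$ and $F$, which is what the description of $M(K)$ through $K^*K$ furnishes.
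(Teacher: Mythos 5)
Your proof is correct, and its skeleton coincides with the paper's: both arguments transfer the problem to the Hilbert space $\textbf{R}(A^{1/2})$ via Theorem \ref{Th_tild}, using Lemma \ref{lemma2} (ii) to identify $\widetilde{T^\sharp}$ with $(\widetilde{T})^*$ and Lemma \ref{lemma2} (iii) to obtain compactness of $\widetilde{T}$, so that everything reduces to the statement that a compact operator $K$ on a Hilbert space is smooth if and only if $K^*$ is. The difference lies in how that reduced statement is handled: the paper simply cites \cite[Th. 1]{Rao_LAA_17}, while you prove it outright --- via the $A=I$ case of Theorem \ref{cor; compact}, the identification of $M(K)$ and $M(K^*)$ with the unit spheres of $E=\ker(K^*K-\|K\|^2 I)$ and $F=\ker(KK^*-\|K\|^2 I)$, and the mutually inverse isometries $x\mapsto \|K\|^{-1}Kx$ and $y\mapsto \|K\|^{-1}K^*y$ between $E$ and $F$. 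Your eigenspace argument is sound (positivity of $\|K\|^2I-K^*K$ does force norm-attaining unit vectors to be eigenvectors of $K^*K$), and it yields slightly more than is needed, namely $\dim E=\dim F$, i.e.\ that the norm attainment sets of $K$ and $K^*$ have the same size. What the paper's route buys is brevity; what yours buys is self-containedness, since no input beyond results already established in the paper is required. Two small remarks: your intermediate observation that $T^\sharp$ is $A$-compact is not actually needed (compactness of $(\widetilde{T})^*$, which is automatic in a Hilbert space, is all the argument uses); and, like the paper, you should tacitly assume $\|T\|_A\neq 0$, since $A$-smoothness is only defined for operators of nonzero $A$-norm.
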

	
	\begin{proof}
		Let $T$ be $A$-smooth. It follows from Theorem \ref{Th_tild} that $\widetilde{T}$ is smooth. Also, from Lemma \ref{lemma2} (iii) we get $\widetilde{T}$ is compact. Thus $(\widetilde{T})^*$ is smooth (see \cite[Th. 1]{Rao_LAA_17}). Note that, $(\widetilde{T})^*=\widetilde{T^\sharp}.$ This implies that $\widetilde{T^\sharp}$ is smooth. Therefore, again following Theorem \ref{Th_tild} we have $T^\sharp$ is $A$-smooth. The converse part follows using the same argument.
	\end{proof}
	
	We conclude this article by characterizing the $A$-smoothness of block diagonal matrices using Theorem \ref{Th_sing}. To this end, we first observe the following lemma.

	Here we consider $\mathbb A=\begin{pmatrix}
		A&0\\
		0&A
	\end{pmatrix}\in\mathbb B(\mathbb H\oplus\mathbb H),$ where $A\in \mathbb B(\mathbb H)$ is a positive operator. Clearly, $\mathbb A$ is a positive operator on $\mathbb H\oplus\mathbb H$ and it generates the semi-inner product $\langle x, y\rangle_{\mathbb A}=\langle x_1, y_1\rangle_{A}+\langle x_2, y_2\rangle_{A}$ for all $x=\begin{pmatrix}
		x_1\\
		x_2
	\end{pmatrix}\in\mathbb H\oplus\mathbb H$ and $y=\begin{pmatrix}
		y_1\\
		y_2
	\end{pmatrix}\in\mathbb H\oplus\mathbb H.$

	\begin{lemma}\label{lm_12}
		Let $T=\begin{pmatrix}
			M&0\\
			0&N
		\end{pmatrix}\in B_{\mathbb A^{1/2}}(\mathbb H\oplus\mathbb H)$ be such that $\|M\|_A>\|N\|_A$ and let $\begin{pmatrix}
			x_n\\
			y_n
		\end{pmatrix}$ be an $\mathbb A$-norming sequence of $T$. Then 
		
		\begin{itemize}
			\item [(i)]$\lim_{n\to\infty} \|x_n\|_A=1.$
			\item [(ii)]$\lim_{n\to\infty} \|y_n\|_A=0.$
			\item [(iii)] $\left\{\frac{x_n}{\|x_n\|_A}\right\}$ is an $A$-norming sequence of $M$.
		\end{itemize}
	\end{lemma}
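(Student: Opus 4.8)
The plan is to use the block-diagonal structure to compute $\|T\|_{\mathbb A}$ and then argue that, because the second block $N$ is strictly smaller in $A$-norm, every $\mathbb A$-norming sequence must concentrate its $\mathbb A$-mass in the first coordinate. First I would record that for $z=\begin{pmatrix} x\\ y\end{pmatrix}$ with $\|z\|_{\mathbb A}^2=\|x\|_A^2+\|y\|_A^2=1$ one has $\|Tz\|_{\mathbb A}^2=\|Mx\|_A^2+\|Ny\|_A^2$. Using $\|Mx\|_A\le\|M\|_A\|x\|_A$ and $\|Ny\|_A\le\|N\|_A\|y\|_A$ together with $\|N\|_A<\|M\|_A$ gives
\[
\|Tz\|_{\mathbb A}^2\le \|M\|_A^2\|x\|_A^2+\|N\|_A^2\|y\|_A^2\le \|M\|_A^2\bigl(\|x\|_A^2+\|y\|_A^2\bigr)=\|M\|_A^2,
\]
while letting $y=0$ and $x$ range over an $A$-norming sequence of $M$ shows this is the best constant; hence $\|T\|_{\mathbb A}=\|M\|_A$.

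Next I would prove (i) and (ii) together. Set $a_n=\|x_n\|_A^2$ and $b_n=\|y_n\|_A^2$, so $a_n+b_n=1$ and $a_n\le 1$. Since the sequence is $\mathbb A$-norming, $\|Mx_n\|_A^2+\|Ny_n\|_A^2\to\|M\|_A^2$; write this quantity as $\|M\|_A^2-\varepsilon_n$, where $\varepsilon_n\ge 0$ (by the bound above) and $\varepsilon_n\to 0$. Feeding this into the domination estimate
\[
\|M\|_A^2-\varepsilon_n=\|Mx_n\|_A^2+\|Ny_n\|_A^2\le \|N\|_A^2+\bigl(\|M\|_A^2-\|N\|_A^2\bigr)a_n
\]
and rearranging yields $a_n\ge 1-\varepsilon_n/\bigl(\|M\|_A^2-\|N\|_A^2\bigr)$, where the denominator is strictly positive precisely by the hypothesis $\|M\|_A>\|N\|_A$. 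Letting $n\to\infty$ forces $a_n\to 1$, which is (i), and then $b_n=1-a_n\to 0$ gives (ii).

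Finally, for (iii) I would note that (i) makes $\|x_n\|_A\neq 0$ for all large $n$, so $x_n/\|x_n\|_A$ is a genuine $A$-unit vector. Since $\|Ny_n\|_A^2\le\|N\|_A^2 b_n\to 0$, the norming condition collapses to $\|Mx_n\|_A^2\to\|M\|_A^2$, whence
\[
\Bigl\|M\tfrac{x_n}{\|x_n\|_A}\Bigr\|_A^2=\frac{\|Mx_n\|_A^2}{a_n}\longrightarrow\frac{\|M\|_A^2}{1}=\|M\|_A^2,
\]
so $\{x_n/\|x_n\|_A\}$ is an $A$-norming sequence of $M$. The whole argument is a convexity / mass-concentration estimate rather than anything delicate; the only step needing care is turning the scalar convergence $\|Mx_n\|_A^2+\|Ny_n\|_A^2\to\|M\|_A^2$ into the quantitative lower bound on $a_n$, and that is exactly the place where the strict gap $\|M\|_A^2-\|N\|_A^2>0$ enters.
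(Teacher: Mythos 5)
Your proof is correct and follows essentially the same route as the paper's: the same domination estimate $\|Mx_n\|_A^2+\|Ny_n\|_A^2\le \|N\|_A^2+(\|M\|_A^2-\|N\|_A^2)\|x_n\|_A^2$ combined with the strict gap $\|M\|_A>\|N\|_A$ forces the $\mathbb A$-mass into the first coordinate, the paper phrasing this via a limsup bound on $\|y_n\|_A^2$ while you give the equivalent quantitative lower bound on $\|x_n\|_A^2$. The only (minor) difference is that you explicitly verify $\|T\|_{\mathbb A}=\|M\|_A$ at the outset, a fact the paper uses implicitly when it writes the norming condition as $\|Mx_n\|_A^2+\|Ny_n\|_A^2\to\|M\|_A^2$.
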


	\begin{proof}
		Since $\begin{pmatrix}
			x_n\\
			y_n
		\end{pmatrix}$ is an $\mathbb A$-norming sequence of $T$ and $\|M\|_A>\|N\|_A,$ we have \[\lim_{n\to\infty}\left(\|Mx_n\|_A^2+\|Ny_n\|_A^2\right)^{1/2}=\|M\|_A\]
		and $\|x_n\|_A^2+\|y_n\|_A^2=1.$ Now, 
		\begin{eqnarray*}
			\|Mx_n\|_A^2+\|Ny_n\|_A^2
			&\leq& \|M\|_A^2\|x_n\|_A^2+\|N\|_A^2\|y_n\|_A^2\\
			&=& \|M\|_A^2(\|x_n\|_A^2+\|y_n\|_A^2)-(\|M\|_A^2-\|N\|_A^2)\|y_n\|_A^2\\
			&=& \|M\|_A^2-(\|M\|_A^2-\|N\|_A^2)\|y_n\|_A^2.
		\end{eqnarray*}
		Thus, 
		\begin{eqnarray*}
			&&\lim_{n\to\infty}\left(\|Mx_n\|_A^2+\|Ny_n\|_A^2\right)\leq\|M\|_A^2-(\|M\|_A^2-\|N\|_A^2)\limsup_{n\to\infty} \|y_n\|_A^2\\
			&\implies&\|M\|_A^2\leq\|M\|_A^2-(\|M\|_A^2-\|N\|_A^2)\limsup_{n\to\infty} \|y_n\|_A^2\\
			&\implies&-(\|M\|_A^2-\|N\|_A^2)\limsup_{n\to\infty} \|y_n\|_A^2\geq0\\
			&\implies&\limsup_{n\to\infty} \|y_n\|_A\leq0\\
			&\implies&\limsup_{n\to\infty} \|y_n\|_A=0\\
			&\implies&\lim_{n\to\infty} \|y_n\|_A=0.
		\end{eqnarray*}
		So, $\lim_{n\to\infty} \|x_n\|_A=1.$
		Now, $\lim_{n\to\infty}\|Mx_n\|_A^2=\|M\|_A^2-\lim_{n\to\infty}\|Ny_n\|_A^2=\|M\|_A^2.$ If we consider the sequence $\left\{\frac{x_n}{\|x_n\|_A}\right\}$ then it becomes the $A$-norming sequence of $M$.
	\end{proof}

	\begin{remark}\label{Rem_norm}
		It is easy to observe that if $\{x_n\}$ is an $A$-norming sequence of $M$ then $\begin{pmatrix}
			x_n\\
			0
		\end{pmatrix}$ is an $\mathbb A$-norming sequence of $T,$ where $\|M\|_A\geq\|N\|_A$.
	\end{remark}
	
	\begin{theorem}
		Let $T=\begin{pmatrix}
			M&0\\
			0&N
		\end{pmatrix}\in \mathbb B_{\mathbb A}(\mathbb H\oplus\mathbb H).$ 
		
		\begin{itemize}
			\item [(i)] If $\|M\|_A>\|N\|_A$ then $T$ is $\mathbb A$-smooth if and only if $M$ is $A$-smooth. 
			\item [(ii)] If $\|M\|_A<\|N\|_A$ then $T$ is $\mathbb A$-smooth if and only if $N$ is $A$-smooth. 
			\item [(iii)] If $\|M\|_A=\|N\|_A$ then $T$ is not $\mathbb A$-smooth.
		\end{itemize}

	\end{theorem}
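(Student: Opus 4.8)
The plan is to reduce each of the three cases to the singleton criterion of Theorem~\ref{Th_sing}, by computing the sets $W_{\mathbb A}(T,S)$ explicitly from the description of the $\mathbb A$-norming sequences furnished by Lemma~\ref{lm_12} and Remark~\ref{Rem_norm}. For part (i), assume $\|M\|_A>\|N\|_A$, so that $\|T\|_{\mathbb A}=\|M\|_A$. Fix an arbitrary $S=\begin{pmatrix}S_{11}&S_{12}\\ S_{21}&S_{22}\end{pmatrix}\in\mathbb B_{\mathbb A^{1/2}}(\mathbb H\oplus\mathbb H)$ and let $\xi_n=\begin{pmatrix}x_n\\ y_n\end{pmatrix}$ be an arbitrary $\mathbb A$-norming sequence of $T$. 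Expanding gives
$$\langle T\xi_n, S\xi_n\rangle_{\mathbb A}=\langle Mx_n, S_{11}x_n\rangle_A+\langle Mx_n, S_{12}y_n\rangle_A+\langle Ny_n, S_{21}x_n\rangle_A+\langle Ny_n, S_{22}y_n\rangle_A.$$
By Lemma~\ref{lm_12} we have $\|x_n\|_A\to 1$ and $\|y_n\|_A\to 0$; combined with the $A$-boundedness of $M,N,S_{ij}$ and the Cauchy--Schwarz inequality, the last three terms tend to $0$. Writing $\widetilde x_n=x_n/\|x_n\|_A$ (defined for large $n$), the fact that $\|x_n\|_A\to 1$ yields $\langle Mx_n, S_{11}x_n\rangle_A-\langle M\widetilde x_n, S_{11}\widetilde x_n\rangle_A\to 0$, while Lemma~\ref{lm_12}(iii) says $\{\widetilde x_n\}$ is $A$-norming for $M$. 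Hence every accumulation value of $\langle T\xi_n,S\xi_n\rangle_{\mathbb A}$ lies in $W_A(M,S_{11})$, so $W_{\mathbb A}(T,S)\subseteq W_A(M,S_{11})$.

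For the reverse inclusion, given $\sigma\in W_A(M,S_{11})$ with an $A$-norming sequence $\{x_n\}$ of $M$, Remark~\ref{Rem_norm} makes $\begin{pmatrix}x_n\\ 0\end{pmatrix}$ an $\mathbb A$-norming sequence of $T$, and $\langle T\begin{pmatrix}x_n\\ 0\end{pmatrix},S\begin{pmatrix}x_n\\ 0\end{pmatrix}\rangle_{\mathbb A}=\langle Mx_n,S_{11}x_n\rangle_A\to\sigma$, so $W_A(M,S_{11})\subseteq W_{\mathbb A}(T,S)$. Thus $W_{\mathbb A}(T,S)=W_A(M,S_{11})$. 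Now part (i) follows from Theorem~\ref{Th_sing}: if $M$ is $A$-smooth then $W_A(M,S_{11})$ is a singleton for every $S_{11}$, hence $W_{\mathbb A}(T,S)$ is a singleton for every $S$, so $T$ is $\mathbb A$-smooth; conversely, if $T$ is $\mathbb A$-smooth, specializing to $S=\begin{pmatrix}S_{11}&0\\ 0&0\end{pmatrix}$ shows $W_A(M,S_{11})$ is a singleton for every $S_{11}\in\mathbb B_{A^{1/2}}(\mathbb H)$, so $M$ is $A$-smooth (here $\|M\|_A>0$, so the notion is defined). Part (ii) is entirely symmetric: interchanging the two coordinates, the analogue of Lemma~\ref{lm_12} gives $\|y_n\|_A\to 1$, $\|x_n\|_A\to 0$, and $W_{\mathbb A}(T,S)=W_A(N,S_{22})$, which yields the equivalence with $A$-smoothness of $N$ in the same manner.

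For part (iii), write $c=\|M\|_A=\|N\|_A$; since $\mathbb A$-smoothness presupposes $\|T\|_{\mathbb A}=c\neq 0$, we have $c>0$. I would take $S=\begin{pmatrix}M&0\\ 0&0\end{pmatrix}\in\mathbb B_{\mathbb A^{1/2}}(\mathbb H\oplus\mathbb H)$ and exhibit two norming sequences giving distinct limits. Choosing an $A$-norming sequence $\{x_n\}$ of $M$, the vectors $\begin{pmatrix}x_n\\ 0\end{pmatrix}$ are $\mathbb A$-norming and give $\langle T\xi_n,S\xi_n\rangle_{\mathbb A}=\|Mx_n\|_A^2\to c^2$; choosing an $A$-norming sequence $\{y_n\}$ of $N$, the vectors $\begin{pmatrix}0\\ y_n\end{pmatrix}$ are $\mathbb A$-norming and give $\langle T\eta_n,S\eta_n\rangle_{\mathbb A}=0$. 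Hence $\{0,c^2\}\subseteq W_{\mathbb A}(T,S)$ with $c^2\neq 0$, so $W_{\mathbb A}(T,S)$ is not a singleton and Theorem~\ref{Th_sing} forces $T$ to fail $\mathbb A$-smoothness.

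The main obstacle is the careful bookkeeping behind the first inclusion in part (i): one must check that the three ``error'' terms genuinely vanish along every norming sequence (robustly enough to pass to limits along subsequences) and that the renormalization $x_n\mapsto\widetilde x_n$ leaves the limit unchanged, so that $W_{\mathbb A}(T,S)$ collapses \emph{exactly} onto $W_A(M,S_{11})$ rather than merely sitting inside a larger set. Once this equality of the associated $W$-sets is secured, all three parts are immediate consequences of Theorem~\ref{Th_sing} together with Lemma~\ref{lm_12} and Remark~\ref{Rem_norm}.
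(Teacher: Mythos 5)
Your proof is correct and follows essentially the same route as the paper's: all three parts are reduced to the singleton criterion of Theorem~\ref{Th_sing}, with Lemma~\ref{lm_12} used to show that the terms involving $y_n$ vanish along any $\mathbb A$-norming sequence and Remark~\ref{Rem_norm} used to lift $A$-norming sequences of $M$ to $\mathbb A$-norming sequences of $T$; even your test operator in (iii) is the paper's choice specialized to $P=M$. The one substantive difference is local and works in your favor: you dispose of the cross terms $\langle Mx_n,S_{12}y_n\rangle_A$, $\langle Ny_n,S_{21}x_n\rangle_A$, $\langle Ny_n,S_{22}y_n\rangle_A$ by Cauchy--Schwarz together with the componentwise $A$-boundedness of the blocks of $S$, whereas the paper rewrites them via the $A$-adjoints $Q^\sharp$, $N^\sharp$ and hence tacitly assumes the witnessing operator lies in $\mathbb B_{\mathbb A}(\mathbb H\oplus\mathbb H)$, even though Theorem~\ref{Th_sing} only produces a witness in $\mathbb B_{\mathbb A^{1/2}}(\mathbb H\oplus\mathbb H)$. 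Your estimate covers every such $S$, so your packaging of the argument as the set identity $W_{\mathbb A}(T,S)=W_A(M,S_{11})$ is both cleaner and slightly more general than the paper's contradiction-based presentation.
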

	
	\begin{proof}
		
		(i) Let $\|M\|_A>\|N\|_A.$   If possible suppose that $M$ is not $A$-smooth. Then there exist $R\in \mathbb B_{A}(\mathbb H)$ and two sequences $\{x_n\}, \{y_n\}$ with $\|x_n\|_A=\|y_n\|_A=1$ and $\|Mx_n\|_A\to\|M\|_A, \|My_n\|_A\to\|M\|_A$ such that 
		\begin{eqnarray}\label{eq_1}
			\langle Mx_n, Rx_n\rangle_A\to \lambda \,\,\text{and}\,\,
			\langle My_n, Ry_n\rangle_A\to \mu, 
		\end{eqnarray}
		where $\lambda\neq\mu$. It follows from Remark \ref{Rem_norm} that $\begin{pmatrix}
			x_n\\
			0
		\end{pmatrix}$ and $\begin{pmatrix}
			y_n\\
			0
		\end{pmatrix}$ are two $\mathbb A$-norming sequences of $T.$  Now consider $S=\begin{pmatrix}
			R &Q\\
			U&V
		\end{pmatrix}\in \mathbb B_{\mathbb A}(\mathbb H\oplus\mathbb H).$ One can clearly observe that $$ \lim \Bigg\langle T \begin{pmatrix}
			x_n\\
			0
		\end{pmatrix}, S \begin{pmatrix}
			x_n\\
			0
		\end{pmatrix}\Bigg\rangle_{\mathbb{A}} = \lim \langle Mx_n, Rx_n\rangle_A = \lambda.$$
		Similarly we get $$ \lim \Bigg\langle T \begin{pmatrix}
			y_n\\
			0
		\end{pmatrix}, S \begin{pmatrix}
			y_n\\
			0
		\end{pmatrix}\Bigg\rangle_{\mathbb{A}} = \lim \langle My_n, Ry_n\rangle_A = \mu.$$ As $\lambda \neq \mu,$ it follows from Theorem \ref{Th_sing} that $T$ is not $\mathbb{A}$-smooth, which is a contradiction.\\
		
		Conversely, let $M$ be $A$-smooth. Then $W_A(M, P)$ is singleton for all $P\in \mathbb B_{A}(\mathbb H).$  If possible suppose that $T$ is not $\mathbb A$-smooth. Then there exists $S=\begin{pmatrix}
			P &Q\\
			U&V
		\end{pmatrix}\in \mathbb B_{\mathbb A}(\mathbb H\oplus\mathbb H)$ and two $\mathbb A$-norming sequences  
		$\begin{pmatrix}
			x_n\\
			y_n
		\end{pmatrix},$ $\begin{pmatrix}
			u_n\\
			v_n
		\end{pmatrix}$ of $T$ such that 
		\begin{eqnarray*}\label{eq_2}
			&& \left\langle T
			\begin{pmatrix}
				x_n\\
				y_n
			\end{pmatrix}, S\begin{pmatrix}
				x_n\\
				y_n
			\end{pmatrix}\right\rangle_\mathbb A\to \lambda \nonumber\\
			&\implies& \langle Mx_n, Px_n\rangle_A+\langle Mx_n, Qy_n\rangle_A+\langle Ny_n, Ux_n\rangle_A+\langle Ny_n, Vy_n\rangle_A \to\lambda\nonumber\\
			&\implies& \langle Mx_n, Px_n\rangle_A+\langle Q^\sharp Mx_n, y_n\rangle_A+\langle y_n, N^\sharp Ux_n\rangle_A+\langle y_n, N^\sharp Vy_n\rangle_A \to\lambda.
		\end{eqnarray*}
		From Lemma \ref{lm_12}, we have $\lim\limits_{n\to\infty}\|y_n\|_A=0$ and so 
		\begin{eqnarray}
			\langle Mx_n, Px_n\rangle_A\to \lambda.
		\end{eqnarray}
		Similarly, from $\left\langle T
		\begin{pmatrix}
			u_n\\
			v_n
		\end{pmatrix}, S\begin{pmatrix}
			u_n\\
			v_n
		\end{pmatrix}\right\rangle_\mathbb A\to \mu,$ we have
		\begin{eqnarray}\label{eq_33}
			\langle Mu_n, Pu_n\rangle_A\to \mu,
		\end{eqnarray}
		where $\lambda\neq\mu$. It follows from Lemma \ref{lm_12} that $\left\{\frac{x_n}{\|x_n\|_A}\right\}$ and $\left\{\frac{u_n}{\|u_n\|_A}\right\}$ are two $A$-norming sequences of $M.$ Since $M$ is $A$-smooth, we have $\langle Mx_n,Px_n\rangle_A\to\mu_0$ and $\langle Mu_n,Pu_n\rangle_A\to\mu_0$. From \eqref{eq_2} and \eqref{eq_33}, we have $\lambda=\mu=\mu_0$. This contradicts the fact that $\lambda\neq\mu$. Therefore, $T$ is $\mathbb A$-smooth.
		
		(ii) The proof of (ii) follows by using the same argument as in (i).
		
		(iii) Let $\|M\|_A=\|N\|_A.$ Suppose $\{x_n\}$ and $\{y_n\}$ are two $A$-norming sequences of $M$ and $N,$ respectively. Let $P\in \mathbb B_{A}(\mathbb H)$ be such that 
		$\langle Mx_n, Px_n\rangle_A\to \lambda (\neq 0).$ Since $\{x_n\}$ and $\{y_n\}$ are two $A$-norming sequences of $M$ and $N,$ then  $\begin{pmatrix}
			x_n\\
			0
		\end{pmatrix}$ and $\begin{pmatrix}
			0\\
			y_n
		\end{pmatrix}$ are two $\mathbb A$-norming sequences of $T.$ Suppose  $S=\begin{pmatrix}
			P &0\\
			0&0
		\end{pmatrix}\in \mathbb B_{\mathbb A}(\mathbb H\oplus\mathbb H).$ Then $\langle Mx_n, Px_n\rangle_A\to \lambda (\neq 0)$ and  $\langle Ny_n, Py_n\rangle_A\to 0.$ Therefore, from Theorem \ref{Th_sing}, we get $T$ is not $\mathbb A$-smooth. 	
	\end{proof}

	\subsection*{Acknowledgements}
	 Somdatta Barik and Souvik Ghosh would like to thank UGC, Govt. of India and CSIR, Govt. of India, respectively, for the support	in the form of Senior Research Fellowship under the mentorship of Professor Kallol Paul.
	 The research of Professor Kallol Paul is supported by CRG Project  bearing File no. CRG/2023/00716 of DST-SERB, Govt. of India.
	 
	
	\subsection*{Declarations}
	
	\begin{itemize}
		
		\item Conflict of interest
		
		The authors have no relevant financial or non-financial interests to disclose.
		
		\item Data availability 
		
		The manuscript has no associated data.
		
		\item Author contribution
		
		All authors contributed to the study. All authors read and approved the final version of the manuscript.
		
	\end{itemize}

\end{document}